\theoremstyle{plain}
\newtheorem{theorem}[subsection]{Theorem}
\newtheorem{lemma}[subsection]{Lemma}
\newtheorem{proposition}[subsection]{Proposition}
\newtheorem{corollary}[subsection]{Corollary}
\theoremstyle{definition}
\theoremstyle{remark}
\newtheorem{examples}[subsection]{Examples}
\newtheorem{remark}[subsection]{Remark}
\let\Join\relax
\DeclareMathOperator*{\Join}{\bigvee}
\newcommand{\meet}{\wedge}
\newcommand{\iso}{\cong}
\newcommand{\eqv}{\simeq}
\newcommand{\cat}[1]{\ensuremath{\mathsf{#1}}}
\newcommand{\A}{\cat A}
\newcommand{\C}{\cat C}
\newcommand{\D}{\cat D}
\newcommand{\catname}[1]{\ensuremath{\mathsf{#1}}}
\newcommand{\Ord}{\catname{Ord}}
\newcommand{\Pos}{\catname{Pos}}
\newcommand{\Rel}{\catname{Rel}}
\newcommand{\Set}{\catname{Set}}
\newcommand{\Top}{\catname{Top}}
\newcommand{\opname}[1]{\ensuremath{\mathsf{#1}}}
\newcommand{\Fam}{\opname{Fam}}
\newcommand{\Alg}{\opname{Alg}}
\newcommand{\op}{\opname{op}}
\newcommand{\FamX}{\Fam(X)}
\begin{document}

\title{Effective descent morphisms of ordered families}

\author[M. M. Clementino]{Maria Manuel Clementino}
\address{University of Coimbra, CMUC, Department of Mathematics, 3000-143
Coimbra, Portugal}
\email{mmc@mat.uc.pt}

\author[R. Prezado]{Rui Prezado}
\thanks{The authors acknowledge partial financial support by {\it Centro de
Matemática da Universidade de Coimbra} (CMUC), funded by the Portuguese
Government through FCT/MCTES, DOI 10.54499/UIDB/00324/2020.}
\address{University of Coimbra, CMUC, Department of Mathematics, 3000-143
Coimbra, Portugal}
\email{ruiprezado@gmail.com}

\keywords{effective descent morphisms, stable regular epimorphisms, lax comma
categories, ordered families, free coproduct completion}

\subjclass{06A07,18A25,18A30,18B35,18D30}

\date{\today}

\begin{abstract}
  We present a characterization of effective descent morphisms in the lax
  comma category $\Ord//X$ when $X$ is a locally complete ordered set, as well
  as in the antisymmetric setting.
\end{abstract}

\maketitle


\section*{Introduction}

The role of lax comma 2-categories in \cite{CLN24}, where the authors study
properties of the lax change-of-base functor in the realm of Janelidze's
Galois theory \cite{GJ, BJ} led Lucatelli Nunes and the first named author of
this note to study the behaviour of the lax comma category $\Ord//X$ of
ordered sets over a fixed ordered set $X$, in \cite{CLN23}. Objects of \( \Ord
// X \) are ordered sets \( A \) equipped with a monotone map \( \alpha \colon
A \to X \), which assigns to each element \( a \in A \) an \(X\)-value \(
\alpha(a) \), and a morphism \( f \colon (A,\alpha) \to (B,\beta) \) is a
monotone map satisfying \( \alpha(a) \leq  \beta(f(a)) \).

In particular, a study of the effective descent morphisms in $\Ord//X$ was
carried out in \cite{CLN23}, when $X$ is a complete ordered set, locating them
between two well-known classes of monotone maps, as stated in Theorem
\ref{th:CLN}.  Subsequently, these results were refined in \cite{CJ23},
extending them to the case when $X$ is locally complete (Theorem \ref{th:CJ}).

In this note, we obtain a complete characterization of the effective descent
morphisms in $\Ord//X$ when $X$ is locally complete, that is, $\downarrow x$
is complete for every $x\in X$. This is accomplished by reducing the problem
to the study of effective descent morphisms in \( \Ord \) -- which were
characterized in \cite{JS} -- and in \( \FamX \) -- which were characterized
by the second named author in \cite{Prez}.

We begin by recalling the necessary descent theoretical background, and by
giving an overview of previously obtained results on effective descent
morphisms in \( \Ord // X \) in the prequels \cite{CLN23, CJ23}.

In particular, it is well-understood that \( \Ord // X \to \Ord \) preserves
effective descent morphisms when \( X \) has a bottom element. Our main
observation is that we can complete the characterization via effective descent
conditions on morphisms in the category \( \Fam(X) \). Thus, we recount the
relevant details about such morphisms from \cite{Prez}, framed in our context.
We also revisit the characterization of stable regular epimorphisms in \( \Ord
// X \) from \cite{CLN23} from the perspective of the work carried out in
\cite{Prez}.

Then, we state and prove our main result (Theorem \ref{th:char}), where we
characterize the effective descent morphisms in \( \Ord // X \) when \( X \)
has a bottom element and is locally complete.

We conclude the paper by observing that the equivalence between \( \Ord // X
\) and \( \prod_{i \in I} \Ord // X_i \), where \( X_i \) are the connected
components of \(X\), allows us to obtain our results without making use of a
bottom element of \( X \).  Indeed, with \(X\) locally complete, each \( X_i
\) has a bottom element and is locally complete as well. Therefore, the descent
results obtained for \( \Ord // X_i \) translate smoothly to \( \Ord // X \).

In the Appendix, we briefly explain how our results can be obtained in the
antisymmetric setting.

\subsection*{Acknowledgements:}

We are grateful to G.~Janelidze for several intersting suggestions, in
particular for the observation of Lemma \ref{lem:gj}, that extends the
characterization of Theorem \ref{th:char} to the setting where \(X\) does not
necessarily have a bottom element.

\subsection*{Declarations:}

No potential competing interest was reported by the authors.

\section{State-of-the-art}

In a category $\A$ with pullbacks, any morphism $p\colon A\to B$ induces a
functor \( p^* \colon \A / B \to \A / A \), by taking pullbacks along \( p \).
This functor has a left adjoint \( p_! \), and this induces a monad \( T^p \),
so we may consider the factorization of \( p^* \) through the category of \(
T^p \)-algebras (the \textit{Eilenberg-Moore} factorization):
\begin{equation}
  \label{eq:mnd.desc}
  \begin{tikzcd}[row sep=large]
    \A / B \ar[rd,"K^p",swap]
           \ar[rr,"p^*"]
      && \A / A \\
    & T^p\text{-}\Alg \ar[ru]
  \end{tikzcd}
\end{equation}
By the Bénabou-Roubaud theorem \cite{BR70}, the factorization
\eqref{eq:mnd.desc} coincides with the \textit{descent factorization}
\cite{JT97, Luc21} of \( p \) -- a result which allows the aptly called 
\textit{monadic description} of descent \cite{JT94}.

We say that
\begin{itemize}[label=--]
  \item
    \( p \) is a descent morphism if \( K^p \) is fully faithful,
  \item
    \( p \) is an effective descent morphism if \( K^p \) is an equivalence.
\end{itemize}

In a category \( \A \) with finite limits, the descent morphisms are exactly
the (pullback-)stable regular epimorphisms, which coincide with the effective
descent morphisms when $\A$ is Barr-exact or locally cartesian closed (see
\cite{JST} for details).

However, in an arbitrary category $\A$ with pullbacks, the identification of
effective descent morphisms may be quite challenging -- a notorious example is
the characterization of effective descent morphisms in the category \( \Top \)
of topological spaces \cite{RT94, CH02}.

A fruitful strategy to understand effective descent morphisms in an arbitrary
category \( \A \) with pullbacks is to find a category \( \D \) with pullbacks
for which the effective descent morphisms are well-understood, and a suitable
embedding \( F \colon \A \to \D \). Then, we may apply the following classical
result:

\begin{theorem}
  \label{th:obstruction}
  Let $\A$ and $\D$ be categories with pullbacks, and $F\colon\A\to\D$ a fully
  faithful, pullback preserving functor. If $f\colon A\to B$ is a morphism in
  $\A$ such that $F(f)$ is effective for descent in $\D$, then the following
  conditions are equivalent:
  \begin{enumerate}[label=(\roman*)]
    \item
      $f$ is an effective descent morphism in $\A$;
    \item
      for every pullback diagram of the form
      \begin{equation*}
        \begin{tikzcd}
          F(C) \ar[d] \ar[r]
            & E \ar[d,"g"] \\
          F(A) \ar[r,"F(f)",swap]
            & F(B)
        \end{tikzcd}
      \end{equation*}
      we have \( E \iso F(D) \) for some \( D \) in \( \A \).
  \end{enumerate}
\end{theorem}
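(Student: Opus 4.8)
The plan is to phrase both conditions through the monadic description of descent recorded in \eqref{eq:mnd.desc}, and to transport information across $F$ using that it is fully faithful and preserves pullbacks. Since $F$ preserves pullbacks it sends the adjunction $f_!\adj f^*$ to $(Ff)_!\adj(Ff)^*$, inducing fully faithful functors $F_A\colon\A/A\to\D/FA$ and $F_B\colon\A/B\to\D/FB$ that intertwine the monads $T^f$ and $T^{Ff}$; hence there is an induced functor $\tilde F\colon T^{f}\text{-}\Alg\to T^{Ff}\text{-}\Alg$, fully faithful because $F_A$ is, fitting into a commuting square
\[
\begin{tikzcd}[row sep=large]
  \A/B \ar[r,"K^f"] \ar[d,"F_B"'] & T^{f}\text{-}\Alg \ar[d,"\tilde F"] \\
  \D/FB \ar[r,"K^{Ff}"'] & T^{Ff}\text{-}\Alg\rlap{.}
\end{tikzcd}
\]
The commutativity amounts to $F$ sending the canonical $T^f$-algebra on $f^*D$ to the canonical $T^{Ff}$-algebra on $(Ff)^*FD$, which is again a consequence of preservation of pullbacks.

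Granting this, full faithfulness of $K^f$ comes for free and requires neither (i) nor (ii): by hypothesis $K^{Ff}$ is an equivalence, so the composite $\tilde F\circ K^f=K^{Ff}\circ F_B$ is fully faithful; since $\tilde F$ is faithful, a short diagram chase on hom-sets forces $K^f$ to be fully faithful as well. Thus, by the monadic description, $f$ is an effective descent morphism if and only if $K^f$ is moreover essentially surjective, and this is the property we shall match against (ii).

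For (ii)$\Rightarrow$(i), I would take an arbitrary $T^f$-algebra $\delta$ with underlying object $\bar p\colon C\to A$ and show it lies in the essential image of $K^f$. Applying $\tilde F$ and using that $K^{Ff}$ is an equivalence, we obtain $g\colon E\to FB$ with $\tilde F\delta\iso K^{Ff}(g)$; reading off underlying objects yields an isomorphism $FC\iso FA\times_{FB}E=(Ff)^*E$ over $FA$, that is, a pullback square of the shape appearing in (ii) with corner $FC$. Condition (ii) then gives $E\iso FD$; fullness of $F$ upgrades $g$ to $F\bar d$ for some $\bar d\colon D\to B$, whence $K^{Ff}(g)\iso K^{Ff}F_B(D,\bar d)=\tilde F K^f(D,\bar d)$. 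Comparing with $\tilde F\delta\iso K^{Ff}(g)$ and using that $\tilde F$ reflects isomorphisms, we conclude $\delta\iso K^f(D,\bar d)$, as required.

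For (i)$\Rightarrow$(ii), start from a pullback square as in (ii) with corner $FC$ and right-hand leg $g\colon E\to FB$; the canonical $T^{Ff}$-algebra $K^{Ff}(g)$ then has underlying object $(Ff)^*E\iso FC$. The crux -- and the step I expect to be the main obstacle -- is to check that this entire algebra descends along $\tilde F$, i.e.\ that $K^{Ff}(g)\iso\tilde F(\delta_0)$ for some $T^f$-algebra $\delta_0$ on $C$. The objects entering its structure, namely $(Ff)^*E$ and its pullbacks along the kernel-pair projections of $Ff$, are all pullbacks of objects and maps in the image of $F$ (the corner being $FC$ and the projection $FC\to FA$ being $F$ of a map by fullness); preservation of pullbacks places them in the replete image of $F$, fullness then lifts the cocycle morphism to $\A$, and faithfulness transfers the algebra axioms. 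Once $\delta_0$ is in hand, effectiveness of $f$ makes $K^f$ an equivalence, so $\delta_0\iso K^f(D,\bar d)$ for some $(D,\bar d)$; applying $\tilde F$ gives $K^{Ff}(g)\iso K^{Ff}F_B(D,\bar d)$, and since $K^{Ff}$ reflects isomorphisms we recover $E\iso FD$. Carefully verifying that \emph{every} piece of the algebra structure lands in the image of $\tilde F$ is where the hypotheses on $F$ are fully exploited.
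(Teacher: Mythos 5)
The paper itself offers no proof of this theorem: it is quoted as a ``classical result'' (it is essentially \cite[Theorem~3.7]{JST}, going back to Janelidze--Tholen), so there is nothing in-paper to compare against, and I assess your argument on its own merits. It is correct, and it is in fact the standard proof of this result: you build the fully faithful comparison $\tilde F\colon T^f\text{-}\Alg\to T^{Ff}\text{-}\Alg$ from pullback-preservation, observe that $K^f$ is automatically fully faithful because $\tilde F\circ K^f\iso K^{Ff}\circ F_B$ is fully faithful and $\tilde F$ is faithful (your hom-set lemma -- $GH$ fully faithful and $G$ faithful imply $H$ fully faithful -- is exactly right), and then match condition (ii) with essential surjectivity of $K^f$ in both directions; the ``crux'' you flag in (i)$\Rightarrow$(ii), namely that a $T^{Ff}$-algebra whose underlying object is $F(C)\to F(A)$ lifts along $\tilde F$, does work precisely as you say, since $T^{Ff}(F_A\bar p)\iso F_A(T^f\bar p)$ by pullback-preservation, so the structure map lies between objects in the image of the fully faithful $F_A$ and lifts uniquely, with the algebra axioms transferred by faithfulness. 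Two wording points deserve tightening, neither a gap: the square relating $K^f$ and $K^{Ff}$ commutes only up to the canonical natural isomorphism coming from pullback-preservation, not strictly; and at the concluding step of each implication, ``reflects isomorphisms'' alone does not suffice -- you must first use \emph{fullness} of $\tilde F$ (respectively of $K^{Ff}$) to realize the given isomorphism as the image of a morphism, and only then reflection to see that morphism is invertible. Since $\tilde F$ is fully faithful and $K^{Ff}$ is an equivalence, both ingredients are available, so the argument stands as written.
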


This technique was used in \cite{JS} by G. Janelidze and M. Sobral to obtain 
the characterization of effective descent morphisms in the category $\Ord$ of
\textit{ordered sets} (that is, sets with a reflexive and transitive relation)
and \textit{monotone maps}:

\begin{theorem}[\cite{JS}]
  Given a morphism $f\colon A\to B$ in $\Ord$:
  \begin{enumerate}
    \item
      $f$ is a descent morphism, or, equivalently, a stable regular
      epimorphism, if:
      \begin{equation*}
        \forall \, b_0\leq b_1\mbox{ in }B,\;\;
        \exists a_0\leq a_1\mbox{ in }A: \quad f(a_0)=b_0,\;f(a_1)=b_1;
      \end{equation*}
    \item
      $f$ is an effective descent morphism if:
      \begin{equation*}
        \forall b_0\leq b_1\leq b_2\mbox{ in }B,\;\;
        \exists a_0\leq a_1\leq a_2\mbox{ in }A: \quad 
        f(a_0)=b_0,\;f(a_1)=b_1,\;f(a_2)=b_2.
      \end{equation*}
  \end{enumerate}
\end{theorem}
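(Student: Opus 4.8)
The plan is to treat the two parts separately: part (1) by a direct computation of stable regular epimorphisms, and part (2) through the embedding technique of Theorem~\ref{th:obstruction}.

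For part (1), I would first invoke the fact recalled above that in the finite-limit category $\Ord$ the descent morphisms are exactly the stable regular epimorphisms, so that it suffices to characterise the latter. To obtain the lifting condition from stability, pull $f$ back along the map $\{0\leq 1\}\to B$ selecting a comparable pair $b_0\leq b_1$: stability makes the projection $A\times_B\{0\leq 1\}\to\{0\leq 1\}$ a regular epimorphism, and since a regular epimorphism in $\Ord$ equips its codomain with the quotient order, the generating edge $0\leq 1$ must already be the image of a comparable pair of the pullback, which unwinds to a pair $a_0\leq a_1$ over $b_0\leq b_1$. Conversely, the lifting condition says exactly that the order of $B$ is the image of the order of $A$, whence $f$ is a regular epimorphism, and one checks directly that this condition is preserved by pullback along an arbitrary monotone map.

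For part (2), I would apply Theorem~\ref{th:obstruction} to the $2$-truncated nerve $N\colon\Ord\to\Set^{\Delta^{\op}_{\leq 2}}$, which sends a preorder $A$ to its sets $A_0,A_1,A_2$ of objects, comparable pairs, and comparable triples, with the evident faces and degeneracies. This functor is fully faithful, since a simplicial map between nerves of preorders is forced by its degree-one component to be the nerve of a monotone map, and it preserves pullbacks, as these are computed pointwise on both sides. Because $\Set^{\Delta^{\op}_{\leq 2}}$ is a presheaf topos, its effective descent morphisms are precisely the pointwise surjections; as surjectivity on triples subsumes surjectivity on pairs and on objects, $N(f)$ is effective for descent if and only if $f$ satisfies the stated $2$-chain lifting condition. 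Granting that condition, Theorem~\ref{th:obstruction} reduces the effectiveness of $f$ to condition~(ii): whenever $g\colon E\to N(B)$ has its pullback along $N(f)$ in the image of $N$, then $E$ itself must lie in the image of $N$, i.e.\ $E$ must be the nerve of a preorder.

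The verification of condition~(ii) is where I expect the real work to lie. Writing the pullback as $N(C)=N(A)\times_{N(B)}E$, the projection $N(C)\to E$ is a pointwise surjection, and the task is to descend the conditions characterising preorder nerves from $N(C)$ to $E$: that $E_1\to E_0\times E_0$ is a monomorphism, and that every composable pair of edges of $E$ admits a unique filling triple. The monomorphism condition already follows from degree-one lifting, since a pair of parallel edges of $E$ can be combined with a lifted edge of $A$ to produce parallel edges of the preorder $C$, which must then coincide. Reflecting the degree-two condition, however, requires the full force of $2$-chain lifting: given edges $u\to v\to w$ of $E$ lying over a chain $b_0\leq b_1\leq b_2$ of $B$, a lift $a_0\leq a_1\leq a_2$ yields a composable pair in $C$, whose composite and filling triple---present because $C$ is a preorder---project back to the required long edge and filling triple of $E$. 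Finally, for necessity I would argue directly, this being the more computational and less conceptual direction: a chain $b_0\leq b_1\leq b_2$ with no lift gives rise to the object $E$ obtained from $N(B)$ by deleting the triple over $(b_0,b_1,b_2)$, whose pullback along $N(f)$ is $N(A)$ itself, so that $E$ is not a nerve yet becomes one after pulling back along $N(f)$, and this failure translates into a descent datum for $f$ that does not descend. The conceptual core, and the main obstacle, is thus the degree-two reflection in condition~(ii), which is exactly what distinguishes the $2$-chain condition of part (2) from the $1$-chain condition of part (1).
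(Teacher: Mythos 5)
The first thing to note is that the paper does not prove this theorem at all: it is imported from \cite{JS}, and the surrounding text only records that its proof uses the embedding strategy of Theorem~\ref{th:obstruction} (the Appendix likewise invokes an embedding of $\Pos$ into $\Rel$ for the related preservation result). So your proposal must be judged on its own merits, and measured against that strategy -- which it follows. Part (1) is correct: pulling back along the two-element chain forces a one-step lift (the quotient order on the codomain of a regular epimorphism in $\Ord$ is the reflexive-transitive closure of the image order, but any composite realizing $0\leq 1$ must contain a single step crossing from the fibre over $0$ to the fibre over $1$, so your conclusion stands), and the converse direction, including stability, is routine as you say. For part (2), your target category $\Set^{\Delta^{\op}_{\leq 2}}$ is a legitimate and clean choice: $N$ is fully faithful and preserves pullbacks, effective descent morphisms in a presheaf topos are the pointwise surjections, and surjectivity on $2$-chains subsumes the lower degrees. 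Your verification of condition (ii) -- that $E$ inherits the monomorphism condition from degree-one lifting and the Segal filler condition from degree-two lifting, both reflected down from the nerve $N(C)$ -- is the genuine content, and it is correct. One small omission: you check existence of fillers in $E$ but not uniqueness; it follows by the same pattern as your monomorphism argument (two fillers of one composable pair in $E$, paired with a single lifted triple of $A$, give two fillers of one composable pair in $N(C)$, which must coincide), so this is easily repaired.

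The flawed part is your necessity sketch, though it is, strictly speaking, extraneous: both items of the statement claim only the sufficiency (``if'') direction. The problem is that Theorem~\ref{th:obstruction} carries the standing hypothesis that $N(f)$ is effective for descent in the target category, and this is precisely what fails when the $2$-chain condition fails: $N(f)$ is then not even an epimorphism in $\Set^{\Delta^{\op}_{\leq 2}}$, so the theorem says nothing, and in particular the implication ``$f$ effective for descent $\Rightarrow$ condition (ii)'' is not available. Your ``deleted triple'' object $E$ (which, incidentally, is only a well-defined truncated simplicial subobject when the offending triple is nondegenerate -- guaranteed if $1$-chains lift, so this should be said) does exhibit the failure, but to conclude you must construct by hand a descent datum for $f$ that does not descend, which your proposal only gestures at; that construction is the actual work in \cite{JS}. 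In summary: what the statement claims, you prove correctly, by essentially the technique the paper attributes to \cite{JS}; the converse direction you volunteer is not established by your argument.
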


Moreover, Theorem \ref{th:obstruction} is also used in \cite{CLN23} and
\cite{CJ23} to study the effective descent morphisms in \( \Ord // X \). This
result is also featured in the present note.

We note that, while the characterizations of Theorem \ref{th:obstruction}
extend naturally to the comma categories $\Ord/X$ via the equivalence
\begin{equation*}
  (\Ord/X)/(B,\beta) \eqv \Ord/B,
\end{equation*}
this is not the case for the lax comma category $\Ord//X$, of which \( \Ord/X
\) is a wide subcategory (i.e. with the same objects but fewer morphisms).

In \cite{CLN23}, the authors make use of Theorem \ref{th:obstruction} and of
the fact that every monotone map $\alpha \colon A\to X$ induces naturally a
functor $\Pi(A,\alpha) \colon X^\op \to \Ord$, so that a morphism $ f \colon
(A,\alpha) \to (B,\beta)$ induces a natural transformation $\Pi(A,\alpha) \to
\Pi(B,\beta)$. Indeed:

\begin{itemize}[label=--]
  \item
    for a complete ordered set $X$, one defines an embedding
    \begin{equation*}
      \xymatrix{\Ord//X\ar[rr]^-\Pi&&[X^\op,\Ord]}
    \end{equation*}
    with $\Pi(A,\alpha)(x)=\{a\in A\,;\,x\leq\alpha(a)\}$ and $\Pi(f)$ given
    by the (co)restriction of $f$ to $\Pi(A,\alpha)(x)\to \Pi(B,\beta)(x)$:
    from $\alpha\leq \beta f$ it follows that if $x\leq \alpha(a)$ then
    $x\leq\beta(f(a))$;
  \item
    in $[X^\op,\Ord]$ a natural transformation $\eta\colon F\to G$ is
    effective for descent if and only if it is pointwise effective for
    descent, that is: for every $x\in X$, the monotone map \linebreak
    $\eta_x\colon F(x)\to G(x)$ is effective for descent in $\Ord$.
\end{itemize}

\begin{theorem}[\cite{CLN23}]
  \label{th:CLN}
  Let $X$ be a complete ordered set. Given a morphism
  $f\colon(A,\alpha)\to(B,\beta)$ in $\Ord//X$, consider the following
  conditions:
  \begin{enumerate}
    \item
      $f\colon A\to B$ and all $f_x\colon A_x\to B_x$ are effective descent
      morphisms in $\Ord$;
    \item
      $f\colon (A,\alpha)\to(B,\beta)$ is effective for descent in $\Ord//X$;
    \item
      $f\colon A\to B$ is effective for descent in $\Ord$.
  \end{enumerate}
  Then \emph{(1) $\Rightarrow$ (2) $\Rightarrow$ (3)}.
\end{theorem}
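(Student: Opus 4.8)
The plan is to treat the two implications separately, using the presheaf embedding $\Pi$ for (1)$\Rightarrow$(2) and the forgetful functor $\Ord//X\to\Ord$ for (2)$\Rightarrow$(3). For (1)$\Rightarrow$(2) I would apply Theorem \ref{th:obstruction} to $F=\Pi\colon\Ord//X\to[X^\op,\Ord]$. The two hypotheses of that theorem are that $\Pi$ is fully faithful and preserves pullbacks. I would verify the latter by recalling that a pullback of $f\colon(A,\alpha)\to(B,\beta)$ along $g\colon(C,\gamma)\to(B,\beta)$ in $\Ord//X$ has the ordinary pullback $A\times_B C$ as underlying ordered set, with value map $(a,c)\mapsto\alpha(a)\meet\gamma(c)$ (the meet exists since $X$ is complete), and that limits in $[X^\op,\Ord]$ are computed pointwise; comparing the two descriptions level by level shows $\Pi$ preserves pullbacks, while full faithfulness is immediate from the reconstruction $f(a)=\Pi(f)_{\alpha(a)}(a)$.

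With Theorem \ref{th:obstruction} in force, the argument splits exactly along the two conditions in (1). First, the hypothesis that every $f_x\colon A_x\to B_x$ is effective for descent in $\Ord$ is, by the pointwise criterion for $[X^\op,\Ord]$ recalled above, precisely the statement that $\Pi(f)$ is effective for descent in $[X^\op,\Ord]$; this is what licenses the application of the theorem. It then remains to verify condition (ii): for every $g\colon E\to\Pi(B,\beta)$ whose pullback $\Pi(A,\alpha)\times_{\Pi(B,\beta)}E$ is isomorphic to some $\Pi(C,\gamma)$, the diagram $E$ itself lies in the essential image of $\Pi$.

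This essential-image verification is where I expect the real difficulty, and where the remaining hypothesis---that $f\colon A\to B$ is effective for descent in $\Ord$---must be used. I would first describe the essential image of $\Pi$ concretely: a functor $G\colon X^\op\to\Ord$ is isomorphic to some $\Pi(D,\delta)$ precisely when its transition maps are order-embeddings realizing each $G(x)$ as an up-set of $G(\bot)$ (writing $\bot$ for the least element of $X$), and when each element of $G(\bot)$ attains a greatest level among $\{x : e\in\mathrm{im}(G(x)\to G(\bot))\}$; one then recovers $D=G(\bot)$ and $\delta(e)$ as that greatest level. The candidate is therefore $D=E(\bot)$, and the task is to produce $\delta$ together with an isomorphism $E\iso\Pi(E(\bot),\delta)$. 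Evaluating the given pullback at $\bot$ yields $A\times_B E(\bot)\iso C$ in $\Ord$, which ties the fibrewise data of $E$ back to the cospan $A\xrightarrow{f}B\leftarrow E(\bot)$; the effective descent of $f$ in $\Ord$, in the form of the Janelidze--Sobral three-chain lifting criterion, is what allows one to lift elements and order-relations from $B$ to $E(\bot)$ and across the levels, thereby forcing the transition maps of $E$ to be order-embeddings and the maximal levels to be attained. Transporting the known good structure of $\Pi(C,\gamma)$ across the projections $\Pi(C,\gamma)\to E$ (which are stable regular epimorphisms, since the $f_x$ are) and checking compatibility with the restriction maps of $E$ is the delicate bookkeeping step, and is the crux of the whole argument.

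Finally, for (2)$\Rightarrow$(3) I would argue that the forgetful functor $U\colon\Ord//X\to\Ord$ preserves effective descent morphisms. Since $X$ is complete it has a least element $\bot$, so $U$ admits the fully faithful left adjoint $S\mapsto(S,\mathrm{const}_\bot)$; in particular $U$ is a right adjoint and hence preserves pullbacks, and the left adjoint preserves pullbacks as well (a meet of least elements is least). Consequently a descent datum for $Uf$ in $\Ord$ can be transported along this adjunction to a descent datum for $f$ in $\Ord//X$, made effective using that $f$ is effective for descent in $\Ord//X$, and pushed back down along $U$; this shows every descent datum for $Uf$ is effective, so that $f\colon A\to B$ is effective for descent in $\Ord$. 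This is the well-understood preservation of effective descent by $\Ord//X\to\Ord$ in the presence of a bottom element.
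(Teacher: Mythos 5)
First, a point of context: the paper does not prove Theorem \ref{th:CLN} at all --- it is recalled from \cite{CLN23}, and the surrounding text only records the two ingredients of that proof, namely the embedding $\Pi\colon\Ord//X\to[X^\op,\Ord]$ and the pointwise criterion for effective descent in $[X^\op,\Ord]$. Your skeleton --- Theorem \ref{th:obstruction} applied to $\Pi$ for (1)$\Rightarrow$(2), and the bottom-element adjoint for (2)$\Rightarrow$(3) --- is exactly the route the paper gestures at. But there is a genuine gap: the entire mathematical content of (1)$\Rightarrow$(2) is the verification of condition (ii) of Theorem \ref{th:obstruction}, i.e.\ that any $E$ whose pullback against $\Pi(f)$ is of the form $\Pi(C,\gamma)$ lies itself in the essential image of $\Pi$, and this is precisely the step you defer (``the delicate bookkeeping step\ldots the crux of the whole argument''). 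Describing the essential image and naming the candidate $D=E(\bot)$ is the easy part; with the crux unexecuted, this is an outline, not a proof.

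Moreover, the one concrete claim you make about how that step would go is misdirected: the three-chain lifting property of $f\colon A\to B$ is not what drives it. Write $P=\Pi(A,\alpha)\times_{\Pi(B,\beta)}E\iso\Pi(C,\gamma)$, so that $P(x)=A_x\times_{B_x}E(x)$. To see that $E(y)\to E(x)$ reflects order, one takes $e,e'\in E(y)$ with comparable images in $E(x)$, notes (by naturality of $g$) that $g_y(e)\leq g_y(e')$ in $B_y$, and lifts this pair to $a\leq a'$ \emph{inside} $A_y$ --- this is descent of the fibre map $f_y$, not any property of $f$: a lift in $A=A_\bot$ need not lie in $A_y$, and then $(a,e)$ is not even an element of $P(y)=A_y\times_{B_y}E(y)$, so it cannot be fed to the order-reflecting transitions of $P$. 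Likewise, to see that each $e\in E(\bot)$ attains a greatest level, one must form $x^*=\Join\{x\;|\;e\in\mathrm{im}(E(x)\to E(\bot))\}$ --- here, and only here, completeness of $X$ enters, yet it never appears in your sketch beyond binary meets --- then use surjectivity of $f_{x^*}$ to produce $(a^*,e)\in P(\bot)$ with $x^*\leq\alpha(a^*)$, so that the greatest level of $(a^*,e)$ in $P$ (available since $P\iso\Pi(C,\gamma)$) is forced to be a greatest level for $e$. In short, the image verification consumes the fibrewise hypotheses a second time, at descent level, together with completeness of $X$; three-chain lifting along $f$ produces elements of $A$ with no control on their $\alpha$-values and cannot substitute for this. (Your (2)$\Rightarrow$(3) is essentially sound; the only gloss to repair is that applying $L$ to a descent datum for $Uf$ yields a descent datum for $L(Uf)\colon(A,\mathrm{const}_\bot)\to(B,\mathrm{const}_\bot)$, not literally for $f$; one fixes this by noting that $L(Uf)$ is the pullback of $f$ along $(B,\mathrm{const}_\bot)\to(B,\beta)$, hence effective for descent by pullback-stability, and that every object over $(B,\mathrm{const}_\bot)$ lies in the image of $L$.)
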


Subsequently, in \cite{CJ23} the authors use the fact that every monotone map
$\alpha\colon A\to X$ naturally defines a family $(A_x)_{x\in X}$ of subsets
of $A$ such that $A_x\subseteq A_{x'}$ whenever $x'\leq x$, and that every
monotone map $f\colon(A,\alpha)\to(B,\beta)$ satisfies $f(A_x)\subseteq B_x$
for each $x\in X$. Considering the category $\C$ having
\begin{itemize}[label=--]
  \item
    as objects, pairs $(A,(A_x)_{x\in X})$, where $A$ is an ordered set and
    $(A_x)_{x\in X}$ is a family of subsets of $A$ such that $A_x\subseteq
    A_{x'}$ whenever $x'\leq x$, 
  \item
    and as morphisms $f\colon(A,(A_x))\to(B,(B_x))$, monotone maps $f\colon
    A\to B$ such that $f(A_x)\subseteq B_x$ for each $x\in X$,
\end{itemize}
one can apply Theorem \ref{th:obstruction} based on the following facts:
\begin{itemize}[label=--]
  \item[--]
    the functor
    \[\xymatrix{\Ord//X\ar[rr]^-F&&\C,}\]
    defined by $F(A,\alpha)=(A,(A_x=\{a\in A,\;\,x\leq \alpha(a)\})_x)$ and
    $F(f)=f$, is fully faithful and preserves pullbacks;
  \item[--]
    a morphism $f\colon (A,(A_x)_x)\to(B,(B_x)_x)$ is effective for descent in
    \C\ if and only if $f\colon A\to B$ and $f_x\colon A_x\to B_x$, for all \(
    x \in X \), are surjective.
\end{itemize}

\begin{theorem}[\cite{CJ23}]
  \label{th:CJ}
  Let $X$ be a locally complete ordered set with a bottom element. For a
  morphism $f\colon(A,\alpha)\to(B,\beta)$ in $\Ord//X$, consider the
  following conditions:
  \begin{enumerate}
    \item
      In $\Ord$, $f\colon A\to B$ is effective for descent, and $f_x\colon
      A_x\to B_x$ is a descent morphism for all \( x \in X \);
    \item
      $f\colon(A,\alpha)\to(B,\beta)$ is effective for descent in $\Ord//X$.
  \end{enumerate}
  Then \emph{(1) $\Rightarrow$ (2)}. If, in addition, for each $x\in X$ every
  subset of $\downarrow \hspace*{-1mm}x$ has a largest element, then \emph{(1)
  $\Leftrightarrow$ (2)}.
\end{theorem}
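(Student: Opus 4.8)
The plan is to derive the statement from Theorem \ref{th:obstruction}, applied to the fully faithful, pullback-preserving functor $F\colon\Ord//X\to\C$. The pivot of the whole argument is a description of the essential image of $F$: writing $S_e=\{x\in X: e\in E_x\}$ for the down-set of indices of an element $e$ of an object $(E,(E_x))$ of $\C$, I claim that $(E,(E_x))$ is isomorphic to some $F(E,\epsilon)$ exactly when (a) each $S_e$ has a largest element $\epsilon(e)$, and (b) each $E_x$ is an up-set of $E$. Indeed, (a) lets one define $\epsilon(e):=\max S_e$, (b) makes $\epsilon$ monotone, and together they give $E_x=\{e: x\leq\epsilon(e)\}$, i.e. $(E,(E_x))\iso F(E,\epsilon)$. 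Everything reduces to transporting conditions (a) and (b) across a pullback.

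For \emph{(1) $\Rightarrow$ (2)}, note first that effectiveness of $f$ and descentness of each $f_x$ force (take $b_0=b_1$ in the lifting conditions) that $f$ and all $f_x$ are surjective; by the recalled characterization, $F(f)$ is then effective for descent in $\C$, so Theorem \ref{th:obstruction} applies and it remains to check the image condition. So let $g\colon(E,(E_x))\to F(B,\beta)$ be a morphism of $\C$ whose pullback $P=F(A,\alpha)\times_{F(B,\beta)}(E,(E_x))$ lies in the image of $F$; I must show $(E,(E_x))$ does too. Here $P$ has underlying order $A\times_B E$ and subsets $P_x=\{(a,e): a\in A_x,\ e\in E_x\}$, so that $S_{(a,e)}=\,\downarrow\!\alpha(a)\cap S_e$, and $P$ being in the image of $F$ means exactly that each such intersection satisfies (a) and each $P_x$ satisfies (b).

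To establish (a) for $(E,(E_x))$, fix $e$; since $e\in E_x$ implies $x\leq\beta(g(e))$, we have $S_e\subseteq\,\downarrow\!\beta(g(e))$, so by local completeness $s:=\bigvee S_e$ exists with $s\leq\beta(g(e))$. Then $g(e)\in B_s$ and, $f_s$ being surjective, there is $a\in A_s$ with $f(a)=g(e)$; as $\alpha(a)\geq s$ we get $S_e\subseteq\,\downarrow\!\alpha(a)$ and hence $S_{(a,e)}=S_e$, which has a largest element because $P$ is in the image of $F$. For (b), suppose $e\in E_x$ and $e\leq e'$; then $g(e)\leq g(e')$ lie in the up-set $B_x$, and the \emph{descent} property of $f_x$ lifts this $2$-chain to $a\leq a'$ in $A_x$ with $f(a)=g(e)$ and $f(a')=g(e')$. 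Now $(a,e)\leq(a',e')$ in $P$ with $(a,e)\in P_x$, and since $P_x$ is an up-set we conclude $(a',e')\in P_x$, so $e'\in E_x$. Thus $(E,(E_x))$ is in the image of $F$ and (2) holds. I expect this transfer step --- in particular the attainment of the supremum $s$ in (a) --- to be the crux, as it is exactly where local completeness and the surjectivity of the fibre maps $f_x$ are indispensable; the difference between descent and mere surjectivity of the $f_x$ is what distinguishes (b) from (a).

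For the converse under the additional hypothesis, that $f\colon A\to B$ is effective for descent in $\Ord$ is immediate: $X$ has a bottom element, so the forgetful functor $\Ord//X\to\Ord$ preserves effective descent morphisms. It remains to see that each $f_x$ is a descent morphism, and this is where the hypothesis that every subset of $\downarrow x$ has a largest element enters: it makes condition (a) automatic for every object over $F(B,\beta)$, since $S_e\subseteq\,\downarrow\!\beta(g(e))$ then always has a maximum. Consequently, image-membership of such objects reduces to the up-set condition (b), and I would argue by contraposition: from a pair $b_0\leq b_1$ in some $B_x$ admitting no lift to a $2$-chain in $A_x$, one builds an object $(E,(E_x))$ over $F(B,\beta)$ whose pullback $P$ still satisfies (a) and (b) --- hence lies in the image of $F$ --- while $E_x$ fails to be an up-set, so $(E,(E_x))$ does not. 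By Theorem \ref{th:obstruction} this contradicts (2). Assembling the construction of this witnessing object is the delicate point of the converse, but it is precisely the reversal of the argument used for (b) above.
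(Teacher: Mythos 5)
A preliminary remark: the paper does not prove Theorem \ref{th:CJ} --- it quotes it from \cite{CJ23} and only recalls the ingredients, namely the embedding $F\colon\Ord//X\to\C$ and the characterization of effective descent morphisms in $\C$. Your strategy is exactly that recalled method, and your implementation of the direction (1)~$\Rightarrow$~(2) is correct and complete: the description of the essential image of $F$ (each $S_e$ has a largest element; each $E_x$ is an up-set) is right, the choice of $a\in A_s$ with $f(a)=g(e)$ for $s=\bigvee S_e$ (using local completeness and surjectivity of $f_s$) correctly forces $S_{(a,e)}=S_e$ to inherit a largest element from the pullback, and the lifting of $g(e)\le g(e')$ through the descent morphism $f_x$ correctly transports up-closedness of $P_x$ to $E_x$.

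The converse, however, has two genuine gaps. First, Theorem \ref{th:obstruction} yields the implication (i)~$\Rightarrow$~(ii) only under its standing hypothesis that $F(f)$ is effective for descent in $\C$, i.e.\ that $f$ and all the $f_x$ are surjective. Assuming only (2), this is not free --- surjectivity of the $f_x$ is precisely what fails in Example \ref{ex:int}(I) --- so your contrapositive appeal to Theorem \ref{th:obstruction} is circular until you prove it. It does follow, but by an argument you omit: (2) implies that the underlying morphism in $\FamX$ is effective for descent (Remark \ref{rem:stabregepi.to.edm}), equivalently that \eqref{eq:stablereg} holds (Proposition \ref{lem:stabregepi}, Lemma \ref{lem:fam.desc}), and then the largest-element hypothesis makes the join $\bigvee_{f(a)=b} x\meet\alpha(a)\iso x$ attained at some $a$, i.e.\ $x\le\alpha(a)$, which is surjectivity of $f_x$. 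Second, the witnessing object is asserted rather than constructed, and it is not merely ``the reversal'' of your up-set argument: with $E=\{e_0\le e_1\}$ over $b_0\le b_1$, the naive filtration $S_{e_0}=\,\downarrow\! x$, $S_{e_1}=\,\downarrow\!\bot$ fails, since the pullback $P$ then violates condition (b) at any $y$ with $\bot<y\le x$ and $y\le\alpha(a_0)$ for some pair $a_0\le a_1$, $f(a_i)=b_i$, which is not a lift in $A_x$ (i.e.\ $x\not\le\alpha(a_0)$); such partial-overlap pairs are not excluded by your hypothesis. The construction that works takes $S_{e_1}$ to consist of $\bot$ together with exactly those $y$ for which such a pair with $y\le\alpha(a_0)$, $y\le x$ exists: then $P$ satisfies (a) and (b), while $x\in S_{e_0}\setminus S_{e_1}$ by the assumed failure of lifting, so $E_x$ is not an up-set. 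Identifying this filtration is the actual crux of the converse, and it is missing from your proposal.
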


Theorem \ref{th:CJ} gives us, for a locally complete ordered set $X$, a
sufficient condition for $f$ to be effective for descent in $\Ord//X$ which is
not necessary in general, as we show in the sequel. Indeed, in order to apply
Theorem \ref{th:obstruction}, we must start with a morphism whose $F$-image is
an effective descent morphism in \C, hence $f$ and all $f_x$ are a priori
surjective, and this condition is not fulfilled by all effective descent
morphisms in $\Ord//X$, as we show in Example \ref{ex:int}.

\section{Familial descent}

One of the main insights behind our main result, Theorem \ref{th:char}, is
that we can reduce the study of effective descent morphisms (respectively,
stable regular epimorphisms) in \( \Ord // X \) to the study of effective
descent morphisms (respectively, stable regular epimorphisms) in \( \FamX \)
and \( \Ord \). This latter problem in \( \FamX \) has been considered before
in \cite[Lemma~4.4]{Prez} (see also \cite[Lemma~3.17]{PrezTh}), from which we
proceed to recall the relevant details.

For a fixed ordered set \(X\), we denote by \( \FamX \) the category of
set-indexed \textit{families of elements} in \(X\). It consists of:
\begin{itemize}[label=--]
  \item
    Objects: families \( (\alpha_j)_{j \in J} \) of elements \( \alpha_j \in X \)
    indexed by a set \( J \),
  \item
    Morphisms \( (\alpha_j)_{j \in J} \to (\beta_k)_{k \in K} \): a function
    \( f \colon J \to K \) such that \( \alpha_j \leq \beta_{f(j)} \) for all \(
    j \in J \).
\end{itemize}

We will assume that \( X \) \textit{locally has binary meets}, that is,
$\downarrow x$ has binary meets for all \(x\). When \(X\) is seen as a thin
category, this condition is equivalent to saying that \(X\) has pullbacks.
Thus, it follows that \( \FamX \) is a category with pullbacks (see, e.g.
\cite[Sections~6.2, 6.3]{BJ}).

We also recall that an ordered set \(X\) with finite meets is said to
be \textit{cartesian closed} if there is an assignment \( (y,z) \mapsto z^y
\), which satisfies 
\begin{equation*}
  x \meet y \leq z \iff x \leq z^y
\end{equation*}
for every \(x \in X \). When \(X\) is complete and the underlying order is
antisymmetric, this is equivalent to \(X\) being a \textit{frame}.
Likewise, an ordered set \( X \) with locally binary meets is said to be
\textit{locally cartesian closed} if \( \downarrow x \) is cartesian closed
for all \( x \in X \). 

While the results of \cite{Prez, PrezTh} study (effective) descent morphisms
in \( \Fam(X) \) when \(X\) has a top element -- due to the pertinence of the
work carried out within -- the results plainly extend to the setting where
\(X\) does not admit a top element.

\begin{lemma}[{\cite[Lemma~4.4]{Prez}, \cite[Lemma~3.17]{PrezTh}}]
  \label{lem:fam.desc}
  Let \( f \colon (\alpha_j)_{j \in J} \to (\beta_k)_{k \in K} \) be a
  morphism in \( \FamX \).
  \begin{enumerate}
    \item
      \label{enum:desc}
      \(f\) is a descent morphism if and only if
      \begin{equation}
        \label{eq:join.distr}
        \forall k \in K, \;\; \forall w \leq \beta_k, \qquad
        w \iso \Join_{f(j)=k} w \meet \alpha_j
      \end{equation} 
    \item
      \label{enum:char}
      If \( X \) is locally complete, then \(f\) is an effective descent
      morphism if and only if \(f\) is a descent morphism and for every family
      \( (\sigma_j)_{j \in J} \leq (\alpha_j)_{j \in J} \)
      satisfying\footnote{Such families satisfying \eqref{eq:descent.data} are
      said to be \textit{descent data} for $f$.}
      \begin{equation}
        \label{eq:descent.data}
        \forall\,k\in K, \;\; \forall\,i,\,j \in f^{-1}(k), \quad
        \sigma_j \meet \alpha_i \iso \alpha_j \meet \sigma_i,
      \end{equation}
      we have
      \begin{equation}
        \label{eq:effectiveness}
        \forall\,k\in K, \;\; \forall\,j \in f^{-1}(k), \quad
        \alpha_j \meet \Join_{i \in f^{-1}(k)} \sigma_i \iso \sigma_j.
      \end{equation} 
    \item
      \label{enum:lcc}
      If \( X \) is locally complete and locally cartesian closed, then \( f
      \) is an effective descent morphism if and only if \( f \) is a descent
      morphism.
  \end{enumerate}
\end{lemma}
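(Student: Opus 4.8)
The plan is to reduce the whole statement to a fibrewise analysis and then read off the three conditions from an explicit description of the descent data of \(f\). Since \(\FamX/(\beta_k)_{k\in K}\eqv\prod_{k\in K}\Fam(\downarrow\beta_k)\), and since the pullback functor \(f^*\), its left adjoint \(f_!\) and the induced monad \(T^f\) all respect this product decomposition, the comparison functor \(K^f\) decomposes as a product over \(k\in K\) of the comparison functors of the restricted morphisms \(f_k\colon(\alpha_j)_{f(j)=k}\to(\beta_k)\) in \(\Fam(\downarrow\beta_k)\). As each clause of the lemma is quantified over \(k\in K\) and refers only to the fibre \(f^{-1}(k)\) and to the complete (respectively cartesian closed) lattice \(\downarrow\beta_k\), it suffices to treat a single \(f_k\). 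I therefore assume throughout that \(K=\{\ast\}\), that \((\beta)\) is a single element, and that \(\alpha_j\leq\beta\) for all \(j\in J\); note that \(\Fam(\downarrow\beta)\) then has finite limits, with terminal object \((\beta)\) and pullbacks and products supplied by the binary meets of \(\downarrow\beta\).

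For part~(\ref{enum:desc}) I would use that in a category with finite limits the descent morphisms are exactly the pullback-stable regular epimorphisms \cite{JST}. A direct computation in \(\Fam(\downarrow\beta)\) shows that the coequalizer of the kernel pair \((\alpha_j\meet\alpha_{j'})_{(j,j')}\rightrightarrows(\alpha_j)_j\) is the single element \((\Join_j\alpha_j)\), so \(f\) is a regular epimorphism exactly when \(\beta\iso\Join_j\alpha_j\). Pulling \(f\) back along an arbitrary morphism \((\delta_n)_n\to(\beta)\) reduces, componentwise, to the maps \((\alpha_j\meet\delta_n)_j\to(\delta_n)\), and requiring all of these to be regular epimorphisms as \(\delta_n\) ranges over all \(w\leq\beta\) yields precisely \eqref{eq:join.distr}.

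For the remaining parts the heart of the matter is an explicit description of the category of descent data, which by the Bénabou–Roubaud theorem \cite{BR70} is \(T^f\text{-}\Alg\). A descent datum consists of a family \((\sigma_m)_{m\in M}\) over \(A\) (with structure map \(p\colon M\to J\)) together with an isomorphism \(\theta\) between the two pullbacks \(\pi_1^*(\sigma)\) and \(\pi_2^*(\sigma)\) along the kernel pair, subject to the cocycle condition. Since \(\downarrow\beta\) is thin, \(\theta\) amounts to an invertible, coherent transport of the elements \(\sigma_m\) across the fibres of \(p\); it follows that every descent datum is a coproduct of connected ones, and that each connected datum is isomorphic to a family \((\sigma_j)_{j\in J}\) indexed by \(J\) itself with \(\sigma_j\leq\alpha_j\). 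For such a family the two pullbacks are \((\alpha_{j'}\meet\sigma_j)_{(j,j')}\) and \((\alpha_j\meet\sigma_{j'})_{(j,j')}\), so the existence of the isomorphism \(\theta\) is exactly the cocycle identity \eqref{eq:descent.data}. This reduction of descent data to compatible families is the main obstacle, and is where thinness of \(X\) is used decisively; granting it, the descent data in the essential image of \(K^f\) are the coproducts of families \(f^*(\delta)=(\alpha_j\meet\delta)_j\) with \(\delta\leq\beta\).

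Granting this, part~(\ref{enum:char}) follows: assuming \(f\) is a descent morphism, \(K^f\) is fully faithful, so \(f\) is effective for descent iff \(K^f\) is essentially surjective, i.e.\ iff every connected datum \((\sigma_j)_j\) is isomorphic to some \(f^*(\delta)\). By the descent condition \eqref{eq:join.distr} any such witness must satisfy \(\delta\iso\Join_i\sigma_i\), which lies in \(\downarrow\beta\) by local completeness, and \(f^*(\Join_i\sigma_i)\iso(\sigma_j)_j\) is exactly \eqref{eq:effectiveness}. Finally, for part~(\ref{enum:lcc}) I would show that when \(\downarrow\beta\) is moreover cartesian closed the condition \eqref{eq:effectiveness} holds automatically for every compatible family: since \(\alpha_j\meet(-)\) then preserves joins, \(\alpha_j\meet\Join_i\sigma_i=\Join_i(\alpha_j\meet\sigma_i)=\Join_i(\sigma_j\meet\alpha_i)=\sigma_j\meet\Join_i\alpha_i=\sigma_j\), using \eqref{eq:descent.data} and \(\sigma_j\leq\alpha_j\leq\Join_i\alpha_i\). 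Hence descent already implies effective descent.
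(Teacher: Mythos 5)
Your proposal is correct, but it follows a genuinely more self-contained route than the paper's proof. The opening fibrewise reduction is shared: the paper likewise passes through \( \Fam(X)/(\beta_k)_{k \in K} \eqv \prod_{k \in K} \Fam(X/\beta_k) \) and the fact that (effective) descent morphisms in the lextensive category \( \FamX \) are detected on coproduct components, which is exactly your product decomposition of \( K^f \) (one small point you elide: to deduce that each factor \( K^{f_k} \) is fully faithful, respectively an equivalence, when the product is, one uses that all categories and hom-sets involved in the other factors are nonempty). After that, however, the paper's proof consists essentially of citations: parts \eqref{enum:desc} and \eqref{enum:char} are imported from \cite[Lemma~4.4]{Prez} and \cite[Lemma~3.17(d),(e)]{PrezTh}, its only genuinely new content being the verification that a top element of \( X \) is unnecessary. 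You instead re-derive the fibrewise characterizations from first principles: part \eqref{enum:desc} by computing the coequalizer of the kernel pair of \( f_k \) in \( \Fam(\downarrow \beta_k) \) and testing stability against single elements \( w \leq \beta_k \), and part \eqref{enum:char} by unwinding the B\'enabou--Roubaud description of descent data, where the unit and cocycle conditions (together with thinness of \( X \), which turns the transition isomorphisms into properties rather than data) split every descent datum into connected ones indexed by \( J \); this makes \eqref{eq:descent.data} literally the cocycle condition and \eqref{eq:effectiveness} literally essential surjectivity of \( K^f \) on connected data. Part \eqref{enum:lcc} is the same distributivity computation in both (the paper records it explicitly in the proof of Corollary~\ref{cor:char}). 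The trade-off: the paper's proof is short and leverages established results, isolating what is new; yours makes the lemma self-contained and explains conceptually why conditions \eqref{eq:descent.data} and \eqref{eq:effectiveness} take the form they do.
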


\begin{proof}
  We first verify that having a top element is redundant. By \cite[Theorem
  3.4(a)]{JST}, we note that \( f \) is a descent morphism in \( \Fam(X) \) if
  and only if it is a stable regular epimorphism in
  \begin{equation*}
    \Fam(X) / (\beta_k)_{k \in K}
      \eqv \prod_{k \in K} \Fam(X)/\beta_k
      \eqv \prod_{k \in K} \Fam(X/\beta_k),
  \end{equation*}
  which is the case if and only if \eqref{eq:join.distr} holds.

  We note that \eqref{enum:desc} follows directly from
  \cite[Lemma~3.17(d)]{PrezTh}.

  To conclude \eqref{enum:char}, we use \cite[Lemma~3.17(e)]{PrezTh}, noting
  that, in lextensive categories (such as \( \Fam(X) \)), (effective) descent
  morphisms are closed under coproducts -- the product of (pre)monadic
  functors is itself (pre)monadic -- so it is enough to confirm that \(
  f\colon (\alpha_j)_{j \in f^{-1}(k)} \to \beta_k \) is an (effective)
  descent morphism for all \(k \in K \) (see \cite[Lemma~3.5]{PrezTh}).

  Finally, we note that \eqref{enum:lcc} follows directly from
  \eqref{enum:char} by distributivity.
\end{proof}

Lemma~\ref{lem:fam.desc} on its own already allows us to smoothly extend the
characterization of stable regular epimorphisms obtained in \cite[Lemma 3.1,
Proposition 3.2]{CLN23} for \(X\) complete and cartesian closed to our
context.

\begin{proposition}
  \label{lem:stabregepi}
  Let \(X\) be a locally complete ordered set with a bottom element, and let
  \( f \colon (A,\alpha) \to (B,\beta) \) be a morphism in \( \Ord // X \).
  \begin{enumerate}
     \item 
     \label{enum:regepi}    
      \(f\) is a regular epimorphism in \( \Ord // X \) if and only if it is a
      regular epimorphism in \( \Ord \) and
      \begin{equation*}
        \forall b \in B, \quad 
          \beta(b) \iso \Join_{f(a)\leq b} \alpha(a).
      \end{equation*}
    \item
      \label{enum:stabregepi}
      \(f\) is a stable regular epimorphism in \( \Ord // X \) if and only if
      it is a stable regular epimorphism in \( \Ord \) and
      \begin{equation}
        \label{eq:stablereg}
        \forall b\in B, \;\;\forall w\leq\beta(b), \quad 
          w \iso \Join_{f(a)=b} w \meet \alpha(a).
      \end{equation} 
  \end{enumerate}
\end{proposition}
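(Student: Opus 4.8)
The plan is to study $f$ by means of the two forgetful functors $U\colon\Ord//X\to\Ord$, $(A,\alpha)\mapsto A$, and $V\colon\Ord//X\to\FamX$, $(A,\alpha)\mapsto(\alpha(a))_{a\in A}$. Both preserve pullbacks: the pullback of $f$ along $g\colon(C,\gamma)\to(B,\beta)$ is carried by $\{(a,c):f(a)=g(c)\}$ with $X$-value $(a,c)\mapsto\alpha(a)\meet\gamma(c)$, the meet existing in $\downarrow\beta(f(a))$ by the local existence of binary meets, and this is exactly how pullbacks are formed in $\Ord$ and in $\FamX$. Since $X$ has a bottom element, $U$ moreover has a left adjoint (send $B$ to $B$ with the constant map at $\bot$), which also guarantees that the possibly empty joins below exist in $X$. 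For orientation I record that \eqref{eq:stablereg} is verbatim the descent condition \eqref{eq:join.distr} for $V(f)$, so by Lemma~\ref{lem:fam.desc}(1) it says precisely that $V(f)$ is a descent morphism in $\FamX$; this is not used logically, but explains the shape of the condition.

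For part~(1) I would compute the coequalizer of the kernel pair of $f$ inside $\Ord//X$. The kernel pair is carried by $A\times_B A$ with $X$-value $\alpha(a)\meet\alpha(a')$, and its coequalizer is formed by taking the coequalizer $q\colon A\to Q$ in $\Ord$ and equipping $Q$ with the least monotone map dominating $\alpha$, namely $\mu(\bar q)=\Join\{\alpha(a):q(a)\leq\bar q\}$; this join exists by local completeness, since all of its terms lie below a common $\beta$-value. As a morphism is a regular epimorphism exactly when it is the coequalizer of its kernel pair, $f$ is a regular epimorphism if and only if this coequalizer is $(B,\beta)$, that is, if and only if $U(f)$ is a regular epimorphism in $\Ord$ (forcing $Q\iso B$ and $q=f$) and $\mu=\beta$, which unwinds precisely to $\beta(b)\iso\Join_{f(a)\leq b}\alpha(a)$.

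For part~(2) I would prove both implications by probing with suitable pullbacks and feeding the outcome into part~(1). Assume first that $f$ is a stable regular epimorphism. Given $h\colon C\to B$ in $\Ord$, lift it to $h\colon(C,\beta h)\to(B,\beta)$ and pull $f$ back; the result is a regular epimorphism, and applying the pullback-preserving $U$ exhibits the pullback of $U(f)$ along $h$ as a regular epimorphism, so $U(f)$ is stable in $\Ord$. To obtain \eqref{eq:stablereg}, fix $b\in B$ and $w\leq\beta(b)$ and pull $f$ back along the morphism $(\term,w)\to(B,\beta)$ selecting $b$; the pullback is $(f^{-1}(b),\alpha\meet w)\to(\term,w)$, again a regular epimorphism, and its join condition from part~(1) reads exactly $w\iso\Join_{f(a)=b}w\meet\alpha(a)$. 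Conversely, assume $U(f)$ is a stable regular epimorphism and \eqref{eq:stablereg} holds, and let $f'\colon(P,\pi)\to(C,\gamma)$ be an arbitrary pullback of $f$ along $g$. Then $U(f')$ is a regular epimorphism, being a pullback of the stable $U(f)$, so by part~(1) it remains to check $\gamma(c)\iso\Join_{f'(p)\leq c}\pi(p)$. Writing $P=\{(a,c'):f(a)=g(c')\}$ with $\pi(a,c')=\alpha(a)\meet\gamma(c')$, and applying \eqref{eq:stablereg} at each $b=g(c')$ for $c'\leq c$ with $w=\gamma(c')$, the right-hand side collapses to $\Join_{c'\leq c}\gamma(c')\iso\gamma(c)$ by monotonicity of $\gamma$. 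Hence every pullback of $f$, including $f$ itself, is a regular epimorphism, so $f$ is a stable regular epimorphism.

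The step I expect to be the main obstacle is the $X$-value bookkeeping: establishing the meet-formula for pullbacks and the join-formula for the coequalizer of the kernel pair, and checking at each stage that the invoked joins genuinely exist — which is precisely where local completeness (together with boundedness of the relevant families below a $\beta$-value) is needed. A related subtlety is the mismatch between the order-theoretic $\leq$ in the regular-epimorphism condition of part~(1) and the equality $f(a)=b$ in the stability condition \eqref{eq:stablereg}; reconciling the two is exactly what the monotonicity argument in the converse of part~(2) is designed to handle.
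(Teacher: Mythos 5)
Your part (2) is essentially the paper's own proof: the forward direction extracts \eqref{eq:stablereg} by pulling $f$ back along the one-point test object $(\{b\},w)\to(B,\beta)$, and the converse verifies the part-(1) criterion for an arbitrary pullback using \eqref{eq:stablereg} at each $b=g(c')$ together with monotonicity of $\gamma$ --- exactly as in the paper (your explicit lifting argument along $(C,\beta h)$, establishing stability of $U(f)$ in $\Ord$, is a detail the paper leaves implicit, and it is correct). Where you genuinely depart from the paper is part (1): the paper simply cites \cite[Lemma~3.1]{CLN23}, whereas you attempt a direct computation of the coequalizer of the kernel pair, and this is where there is a real gap.

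The gap sits precisely at the point you dismiss as bookkeeping. For $(Q,\mu)$ to be the coequalizer, $\mu$ must be least among \emph{all} monotone $\nu\colon Q\to X$ with $\alpha\leq\nu q$, so each $\mu(\bar q)$ must be a least upper bound of $\{\alpha(a):q(a)\leq\bar q\}$ \emph{in $X$}. Local completeness does not provide this: it provides the join inside $\downarrow\beta(m(\bar q))$ (where $m\colon Q\to B$ is the comparison map), and in a merely locally complete $X$ a join computed inside a downset need not be a least upper bound in $X$, because competing cocones can take values outside that downset. Concretely, let $X=\{\bot,a,b,c_1,c_2\}$ with $\bot$ the bottom, $a,b\leq c_1$, $a,b\leq c_2$, and with $a,b$ incomparable and $c_1,c_2$ incomparable: every $\downarrow x$ is a finite lattice, so $X$ is locally complete with a bottom element, yet $\{a,b\}$ has no join in $X$. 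Take $A$ the two-element antichain with $\alpha=(a,b)$, $B=\{*\}$ with $\beta(*)=c_1$, and $f$ the unique map. Your recipe yields $(Q,\mu)=(\{*\},c_1)=(B,\beta)$, so you would conclude that $f$ is a regular epimorphism in $\Ord//X$; it is not, since the coequalizing cocone $(A,\alpha)\to(\{*\},c_2)$ cannot factor through $(\{*\},c_1)$ (that would require $c_1\leq c_2$), and $\Ord//X$ has kernel pairs, so a regular epimorphism must be the coequalizer of its kernel pair. Hence your $(Q,\mu)$ fails the universal property, and part (1) with the joins read locally --- which is all your construction produces --- is false. This propagates into part (2): your converse invokes exactly the implication [$U(f')$ regular epi in $\Ord$ plus the join condition $\Rightarrow f'$ regular epi in $\Ord//X$], which the example refutes under that local reading. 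To repair the argument, the join conditions must be formulated and used as least-upper-bound statements in $X$ itself (as they automatically are in the complete setting of \cite{CLN23}, which is why the paper can lean on that citation), and then one must check the specific instances needed --- the ``if'' direction in general, and the ``only if'' direction for one-point codomains --- rather than appeal to existence of joins via local completeness; this distinction, not existence, is where the locally complete case genuinely differs from the complete one.
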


\begin{proof}
  We note that \eqref{enum:regepi} is precisely \cite[Lemma 3.1]{CLN23}, so we
  focus on \eqref{enum:stabregepi}.

  If \( f \) is a stable regular epimorphism in \( \Ord // X \), then, for each
  \( b \in B \) and \( w \leq \beta(b) \), we consider the pullback diagram
  \begin{equation*}
    \begin{tikzcd}
      \Big( f^{-1}(b), \big(a \mapsto w \meet \alpha(a)\big) \Big)
        \ar[r,"u"] \ar[d]
        & (b, w) \ar[d] \\
      (A, \alpha) \ar[r,"f",swap]
        & (B,\beta)
    \end{tikzcd}
  \end{equation*}
  so that \( u \) is a regular epimorphism in \( \Ord // X \), which entails
  \eqref{eq:stablereg}, as desired.

  Conversely, if \eqref{eq:stablereg} holds, then for any pullback diagram
  \begin{equation*}
    \begin{tikzcd}
      \Big(A \times_B C,
           \big((a,c) \mapsto \gamma(c) \meet \alpha(a) \big)\Big)
        \ar[r,"\pi_2"] \ar[d]
        & (C, \gamma) \ar[d,"g"] \\
      (A, \alpha) \ar[r,"f",swap]
        & (B,\beta)
    \end{tikzcd}
  \end{equation*}
  we claim that \( \pi_2 \) is a (stable) regular epimorphism. Indeed, for
  each \( c \in C \) we have \( \gamma(c) \leq \beta(g(c)) \), so from
  \eqref{eq:stablereg} we deduce that
  \begin{equation*}
    \gamma(c) \iso \Join_{f(a)=g(c)} \gamma(c) \meet \alpha(a)
              \iso \Join_{\substack{f(a')=g(c') \\ c' \leq c}}
                  \gamma(c') \meet \alpha(a')
  \end{equation*}
  which indeed confirms that \( \pi_2 \) is a regular epimorphism.
\end{proof}

\begin{remark}
  \label{rem:stabregepi.to.edm}
  We point out that condition \eqref{eq:stablereg} can be interpreted in the
  category $\FamX$ by considering the (faithful) forgetful functor
  \[\xymatrix{\Ord//X\ar[rr]&&\FamX}\]
  which maps $(A,\alpha)$ to the family $(\alpha(a))_{a\in A}$. Thus, by Lemma
  \ref{lem:fam.desc}, condition \eqref{enum:stabregepi} can be restated as
  follows: \textit{$f \colon (A,\alpha) \to (B,\beta)$ is a stable regular
  epimorphism in $\Ord//X$ if and only if the underlying morphisms in \( \Ord
  \) and \( \FamX \) are stable regular epimorphisms.}

  In fact, we can say more: since \( X \) is assumed to be locally complete,
  \( \Ord // X \to \Fam(X) \) preserves effective descent morphisms, by
  arguments analogous to those of \cite[Section~2]{CJ23}. Therefore, when \( X
  \) is locally complete and has a bottom element, we conclude that both
  forgetful functors
  \begin{equation}
    \label{eq:forgetful}
    \begin{tikzcd}
      & \Ord//X \ar[rd] \ar[ld] \\
      \Ord && \FamX
    \end{tikzcd}
  \end{equation}
  preserve effective descent morphisms.
\end{remark}

\section{The characterization}

Having reviewed the necessary details, we may proceed to prove our main
result:

\begin{theorem}
  \label{th:char}
  Let $X$ be a locally complete ordered set with a bottom element. A morphism
  $f\colon (A,\alpha) \to (B,\beta)$ is effective for descent in $\Ord//X$ if
  and only if
  \begin{enumerate}[label=(\arabic*)]
    \item
      \label{enum:left}
      $f\colon A\to B$ is effective for descent in $\Ord$; that is
      \begin{equation*}
        \forall b_0\leq b_1\leq b_2\text{ in }B,\;\;\exists a_0\leq a_1\leq a_2
        \text{ in }A\colon\quad f(a_0)=b_0,\;f(a_1)=b_1,\;f(a_2)=b_2.
      \end{equation*}
    \item
      \label{enum:right}
      we have
      \begin{equation*}
        \forall\: b_0\leq b_1,\:\forall\: w \leq \beta(b_0), \quad
        w \iso \Join_{\substack{a_0 \leq a_1 \\ f(a_i)=b_i}} w \meet \alpha(a_0).
      \end{equation*}
    \item
      \label{enum:center}
      for every family \( (\sigma(a))_{a \in A} \leq (\alpha(a))_{a \in A} \)
      satisfying 
      \begin{equation}
        \label{eq:descprop}
        \forall \, b \in B, \;\;
        \forall \, a, a' \in f^{-1}(b), \quad
        \sigma(a') \meet \alpha(a) \iso \alpha(a') \meet \sigma(a),
      \end{equation}
      we have
      \begin{equation*}
        \forall \, b \in B, \;\;
        \forall \, a' \in f^{-1}(b), \quad
        \alpha(a') \meet \Join_{a \in f^{-1}(b)} \sigma(a) \iso \sigma(a').
      \end{equation*}
  \end{enumerate}
\end{theorem}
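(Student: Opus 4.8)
The plan is to establish the two implications separately, in each case reducing as much as possible to the descent theory of \( \Ord \) (Janelidze--Sobral) and of \( \FamX \) (Lemma~\ref{lem:fam.desc}), so that the genuinely new content is concentrated in the two-variable condition~\ref{enum:right}. The guiding principle is the monadic description of descent: \( f \) is effective for descent exactly when it is a descent morphism and the comparison \( K^f \) is moreover essentially surjective, i.e.\ every descent datum descends.

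For \emph{necessity}, suppose \( f \) is effective for descent in \( \Ord//X \). Condition~\ref{enum:left} follows immediately from Remark~\ref{rem:stabregepi.to.edm}: since \( X \) has a bottom element, the forgetful functor \( \Ord//X \to \Ord \) preserves effective descent, so \( f \colon A \to B \) is effective for descent in \( \Ord \), which is precisely the three-chain lifting of \ref{enum:left}. In the same way the forgetful functor \( \Ord//X \to \FamX \) preserves effective descent, so \( (\alpha(a))_{a} \to (\beta(b))_{b} \) is effective for descent in \( \FamX \); feeding this into Lemma~\ref{lem:fam.desc}(\ref{enum:char}) yields at once the fibrewise identity \eqref{eq:stablereg} (the \( b_0 = b_1 \) instance of \ref{enum:right}) and the effectiveness clause \ref{enum:center}, which is \eqref{eq:effectiveness} transported along the forgetful functor, with \eqref{eq:descprop} matching \eqref{eq:descent.data}. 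The remaining, two-variable, strength of \ref{enum:right} does not come from either forgetful functor in isolation, since both \eqref{eq:stablereg} and \ref{enum:center} are fibrewise over a single \( b \); I would extract it from the effectiveness of \( f \) itself, by pulling \( f \) back along the two-chain \( (\{c_0 \leq c_1\},\gamma) \) with \( \gamma(c_0)=w \) and \( \gamma(c_1)=\beta(b_1) \) lying over \( b_0 \leq b_1 \), and reading off the join formula from the descended data; it is the order \( c_0 \leq c_1 \), and not the two fibres in isolation, that forces the join to range over \emph{extendable} pairs \( a_0 \leq a_1 \).

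For \emph{sufficiency}, assume \ref{enum:left}, \ref{enum:right} and \ref{enum:center}. First, \ref{enum:left} makes \( f \) a stable regular epimorphism in \( \Ord \) (three-chains subsume two-chains) and the \( b_0=b_1 \) instance of \ref{enum:right} is exactly \eqref{eq:stablereg}; by Proposition~\ref{lem:stabregepi}(\ref{enum:stabregepi}) this already shows that \( f \) is a descent morphism in \( \Ord//X \). It then remains to prove that every descent datum is effective, which I would organize through Theorem~\ref{th:obstruction}: given a pullback of \( f \) along an arbitrary \( g \), one must realise the pulled-back object as a genuine object \( (D,\delta) \) of \( \Ord//X \). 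Its underlying ordered set is reconstructed exactly as in Janelidze--Sobral, using the three-chain lifting of \ref{enum:left}, while the valuation \( \delta \) is reconstructed from the familial effectiveness \ref{enum:center} applied to the descent family carried by \( g \) (whose compatibility is precisely \eqref{eq:descprop}); condition~\ref{enum:right} is then invoked to check that the two reconstructions are mutually coherent.

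The main obstacle is exactly this last coherence. Each reconstruction is routine on its own --- the order-theoretic one is the \( \Ord \) argument and the valuation-theoretic one is the \( \FamX \) argument of Lemma~\ref{lem:fam.desc}(\ref{enum:char}) --- but they must be carried out on one and the same carrier and must agree: the valuation produced by \ref{enum:center} has to be monotone for the order produced by \ref{enum:left} and has to take the correct value along each lifted chain \( a_0 \leq a_1 \). The fibrewise identity \eqref{eq:stablereg} controls values only within a single fibre and says nothing about how they propagate along the order, so it cannot close this gap; it is precisely the two-variable form~\ref{enum:right}, relating a weight \( w \leq \beta(b_0) \) at the lower end of a chain \( b_0 \leq b_1 \) to joins indexed by extendable pairs, that transports valuations along the reconstructed order and supplies the missing compatibility. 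Verifying that \ref{enum:right} furnishes exactly this coherence, and that no further condition is needed, is the crux of the proof.
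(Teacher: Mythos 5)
Your overall skeleton --- reduce to effective descent in \( \Ord \) and in \( \FamX \), with the two-variable condition \ref{enum:right} supplying the compatibility between the two --- is the paper's strategy, and your derivation of \ref{enum:left}, \ref{enum:center} and the fibrewise instance \eqref{eq:stablereg} of \ref{enum:right} from Remark \ref{rem:stabregepi.to.edm} and Lemma \ref{lem:fam.desc} is correct. But at the two places where the real work happens, the proposal has genuine gaps. For sufficiency, your appeal to Theorem \ref{th:obstruction} is both incomplete and backwards. The theorem first requires an ambient category in which the image of \( f \) is \emph{already known} to be effective for descent; the paper takes \( \Ord \times_\Set \FamX \) and establishes this prerequisite via Lemma \ref{lem:inbetween} (effective descent in the pseudo-pullback is detected componentwise), a step absent from your outline --- without it, ``gluing'' the \( \Ord \)- and \( \FamX \)-reconstructions cannot be organized at all. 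Moreover, condition (ii) of Theorem \ref{th:obstruction} \emph{assumes} that the pullback corner lies in the image of \( U \) and asks you to prove that the domain \( (C,(\chi_c)_{c \in C}) \) of \( g \) lies in the image, i.e.\ that \( \chi \) is monotone; you propose instead to ``realise the pulled-back object as a genuine object of \( \Ord//X \)'', which is the hypothesis, not the conclusion. Finally, the coherence step you flag as ``the crux'' and leave open is precisely the content of the paper's proof, and it is a short computation: for \( c_0 \leq c_1 \) in \( C \) with \( b_i = g(c_i) \), monotonicity of the pullback's family \( \delta \) together with \ref{enum:right} gives
\begin{equation*}
  \chi(c_0)
    \iso \Join_{\substack{a_0 \leq a_1 \\ f(a_i)=b_i}} \chi(c_0) \meet \alpha(a_0)
    =    \Join_{\substack{a_0 \leq a_1 \\ f(a_i)=b_i}} \delta(a_0,c_0)
    \leq \Join_{a \in f^{-1}(b_1)} \delta(a,c_1)
    \leq \chi(c_1).
\end{equation*}
A proof that stops before this computation has not actually used \ref{enum:right}, so it cannot be complete.

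For the necessity of \ref{enum:right}, your test object is the wrong one, and the mechanism you describe cannot extract the condition. With \( \gamma(c_0)=w \) and \( \gamma(c_1)=\beta(b_1) \), the family \( \gamma \) is monotone, so \( (\{c_0 \leq c_1\},\gamma) \) is already an object of \( \Ord//X \) over \( (B,\beta) \); the canonical descent datum carried by its pullback along \( f \) descends to this object itself, so ``reading off the descended data'' returns only what you put in --- everything you can extract this way is fibrewise and already contained in \eqref{eq:stablereg} and \ref{enum:center}. To force the join over \emph{pairs} \( a_0 \leq a_1 \), the test family must be one that is monotone \emph{only if} \ref{enum:right} holds: the paper takes \( \chi_{b_0}=w \) and \( \chi_{b_1}=\Join_{a_0 \leq a_1,\, f(a_i)=b_i} w \meet \alpha(a_0) \), which is a legitimate object of \( \Ord \times_\Set \FamX \) but not a priori in the image of \( U \) (indeed \( \chi_{b_1} \leq \chi_{b_0} \)); one then checks that its pullback family \( \xi \) \emph{is} monotone --- the mixed case \( (a,b_0) \leq (a',b_1) \) holds exactly because \( w \meet \alpha(a) \) is then a term of the join, which is where your ``extendable pairs'' genuinely enter --- and invokes Theorem \ref{th:obstruction} for the effective descent morphism \( U(f) \) to conclude that \( \chi \) is monotone, i.e.\ \( w \leq \chi_{b_1} \). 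Your intuition about why the pairs appear is right, but the construction that realizes it --- building the join itself into the upper value of an a priori non-monotone family, and working in the larger category where such families exist --- is missing from the proposal.
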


To prove this result, it is natural to consider the (pseudo)pullback diagram
below (see \cite{JS93}, noting that \( \FamX \to \Set \) is an
(iso)fibration):
\begin{equation*}
  \begin{tikzcd}
      \Ord\times_\Set \FamX
        \ar[d,"\rho_1",swap]
        \ar[r,"\rho_2"]
      & \FamX \ar[d] \\
      \Ord \ar[r]
      & \Set
  \end{tikzcd}
\end{equation*}
as well as the functor \( \Ord // X \to \Ord \times_\Set \FamX \) induced by
the forgetful functors~\eqref{eq:forgetful}.

Via \cite[Corollary~9.6]{Luc} and a suitable adjustment of
\cite[Corollary~2.6]{CJ23}, we obtain:

\begin{lemma}
  \label{lem:inbetween}
  Let $X$ be a locally complete ordered set with a bottom element.  A morphism
  $f$ is effective for descent in $\Ord\times_\Set\FamX$ if and only if:
  \begin{enumerate}
    \item
      $\rho_1(f)$ is effective for descent in \Ord.
    \item
      $\rho_2(f)$ is effective for descent in $\FamX$.
  \end{enumerate}
\end{lemma}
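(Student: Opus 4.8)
The plan is to treat $\Ord\times_\Set\FamX$ as a strict pullback and transport the problem to its two legs. First I would unwind the (pseudo)pullback concretely: since $\FamX\to\Set$ is an isofibration, the pseudopullback agrees up to equivalence with the strict pullback, whose objects are ordered sets $A$ equipped with an \emph{arbitrary} map $\alpha\colon A\to X$ (the monotonicity constraint of $\Ord//X$ being dropped) and whose morphisms are the monotone maps $f$ with $\alpha(a)\le\beta(f(a))$ for all $a$. In this description the projections $\rho_1,\rho_2$ and the composite forgetful functor $U\colon\Ord\times_\Set\FamX\to\Set$ all preserve pullbacks, which are computed as in $\Ord$ and in $\FamX$ over the underlying sets. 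This is the setting in which the general pseudopullback machinery applies.

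The engine of the proof is \cite[Corollary~9.6]{Luc}, which for an iso-comma square of categories detects effective descent jointly along the two projections together with the base. Applied here, it yields that $f$ is effective for descent in $\Ord\times_\Set\FamX$ if and only if $\rho_1(f)$ is effective for descent in $\Ord$, $\rho_2(f)$ is effective for descent in $\FamX$, \emph{and} $U(f)$ is effective for descent in $\Set$. To invoke this corollary I would verify its hypotheses for the concrete functors $\Ord\to\Set$ and $\FamX\to\Set$ by adapting the argument of \cite[Corollary~2.6]{CJ23}: local completeness of $X$ guarantees, via Lemma~\ref{lem:fam.desc}, that $\FamX$ has the joins required for the effectiveness clause \eqref{eq:effectiveness}, while the bottom element of $X$ supplies the initial behaviour needed for the base-change adjoints to interact correctly, so that the Eilenberg--Moore factorization of $f$ is the pseudopullback of the Eilenberg--Moore factorizations of $\rho_1(f)$ and $\rho_2(f)$.

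It then remains to discharge the third, base-level condition. Here I would recall that the effective descent morphisms in $\Set$ are exactly the surjections, and observe that condition (1) already forces $U(f)$ to be surjective: if $\rho_1(f)$ is effective for descent in $\Ord$, then taking a constant chain $b_0=b_1=b_2$ in the characterization of \cite{JS} produces a preimage of every element of the codomain. Hence the $\Set$-condition is automatically satisfied whenever (1) holds, so in the presence of (1) and (2) the general three-condition characterization of \cite[Corollary~9.6]{Luc} collapses to precisely the two listed conditions; conversely, effective descent of $f$ yields (1) and (2) by applying the same corollary to the two projections.

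The main obstacle is the hard implication of the general result, namely showing that effectiveness along $\rho_1$ and $\rho_2$ suffices to reconstruct objects of the pseudopullback from descent data. Concretely, given descent data whose $\Ord$- and $\FamX$-components are each effective, one must glue them into a single object $(D,\delta)$ over the reconstructed underlying set; the agreement of the two components over $\Set$ is what makes the gluing possible, but checking that the glued order and the glued family jointly satisfy the required universal property is the delicate point. This is exactly where local completeness (supplying the joins in \eqref{eq:effectiveness}) and the bottom element of $X$ are used, and it is the technical heart that \cite[Corollary~9.6]{Luc}, together with the adaptation of \cite[Corollary~2.6]{CJ23}, is designed to handle.
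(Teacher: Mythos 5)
Your overall architecture is the same as the paper's: realize $\Ord\times_\Set\FamX$ as a strict pullback via the isofibration $\FamX\to\Set$, feed it into \cite[Corollary~9.6]{Luc}, and supplement with an adjustment of \cite[Corollary~2.6]{CJ23}. Your observation that the $\Set$-level condition is automatic (effective descent morphisms in $\Ord$ are surjective, and surjections are exactly the effective descent morphisms in $\Set$) is correct and is indeed the reason the lemma lists two conditions rather than three.

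The gap is in your converse. \cite[Corollary~9.6]{Luc} is a sufficiency criterion only: if both projections of a morphism are of effective descent and its image in the base is of descent, then the morphism is of effective descent in the pseudopullback. No biconditional of the general form you attribute to it can hold: applying it to the pseudopullback of $\B \to \D \leftarrow \D$ (identity on the right leg), which is equivalent to $\B$, such a biconditional would force \emph{every} pullback-preserving functor $\B\to\D$ to preserve effective descent morphisms, which is false. Consequently, ``applying the same corollary to the two projections'' does not establish that effective descent of $f$ in $\Ord\times_\Set\FamX$ implies conditions (1) and (2); what is actually needed is that $\rho_1$ and $\rho_2$ \emph{preserve} effective descent morphisms, and that is precisely what the paper's ``suitable adjustment of \cite[Corollary~2.6]{CJ23}'' is for, using the hypotheses on $X$ in an essential way. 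For $\rho_1$ the bottom element provides a fully faithful, pullback-preserving left adjoint section $L_1\colon C\mapsto (C,\bot)$ of $\rho_1$ (where $\bot$ denotes the constant family at the bottom element); the pullback of $f$ along the counit $(B,\bot)\to(B,\beta)$ is $L_1(\rho_1(f))$, hence of effective descent by stability, and since every object over $(B,\bot)$ is, up to isomorphism, in the image of $L_1$, Theorem~\ref{th:obstruction} yields that $\rho_1(f)$ is of effective descent in $\Ord$. For $\rho_2$ the corresponding section equips a family with the discrete order, but the counit-pullback of $f$ is then $A$ with its order restricted to the fibres of $f$, which need not be discrete, so the same trick does not apply verbatim and one must instead lift descent data from $\FamX$ to the pullback category. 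None of this appears in your proposal, so the ``only if'' half of the lemma remains unproven as written.
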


Now, the fully faithful and pullback preserving functor
\begin{equation*}
  \xymatrix{\Ord//X\ar[rr]^-U&&\Ord\times_\Set\FamX}
\end{equation*}
and Theorem \ref{th:obstruction} give us the tools to characterize effective
descent morphisms in $\Ord//X$. Before proceeding to the proof, we recall
that the objects $\Ord\times_\Set\FamX$ consist of pairs $(C,(\chi_c)_{c\in C})$
where $C$ is an ordered set and $(\chi_c)_{c\in C}$ is a family of elements of
$X$, that is, a map $\chi\colon C\to X$. Such a pair is in the (essential)
image of \( U \) if and only if \( \chi \) is monotone.

\begin{proof}[Proof of Theorem \ref{th:char}]
  Let \( f \colon (A,\alpha) \to (B, \beta) \) be a morphism in \( \Ord // X
  \) satisfying conditions \ref{enum:left}--\ref{enum:center}.

  Given that condition \ref{enum:right} holds, we note that, for all \( b_0
  \leq b_1 \) in \( B \) and all \( w \leq \beta(b_0) \) in \(X\), we have
  \begin{equation*}
    w \iso \Join_{\substack{a_0 \leq a_1 \\ f(a_i)=b_i}} w \meet \alpha(a_0)
      \leq \Join_{a_0 \in f^{-1}(b_0)} w \meet \alpha(a_0)
      \leq w,
  \end{equation*}
  hence, together with condition \ref{enum:center}, we conclude that \(
  \rho_2(U(f)) \) is an effective descent morphism in \( \FamX \) by Lemma
  \ref{lem:fam.desc}.  Thus, if condition \ref{enum:left} also holds, \(
  \rho_1(U(f)) \) is an effective descent morphism in \( \Ord \), so \( U(f)
  \) is an effective descent morphism in \( \Ord \times_\Set \FamX \).

  Now, we apply Theorem \ref{th:obstruction}: if we have a pullback diagram
  \begin{equation*}
    \xymatrix{U(D,\delta)\ar[r]\ar[d]&(C,(\chi_c)_{c\in C})\ar[d]^g\\
    U(A,\alpha)\ar[r]_{U(f)}&U(B,\beta)}
  \end{equation*}
  we want to show that $\chi\colon C\to X$ is monotone. Let $c_0\leq c_1\in C$
  and let $b_i=g(c_i)$ for $i=0,1$. Then $\chi(c_0)\leq \beta(b_0)$ and
  therefore, by condition \ref{enum:right},
  \begin{equation*}
    \chi(c_0) 
      \iso \Join_{\substack{a_0 \leq a_1 \\ f(a_i)=b_i}} 
              \chi(c_0) \meet \alpha(a_0)
      =    \Join_{\substack{a_0 \leq a_1 \\ f(a_i)=b_i}}
              \delta(a_0,c_0)
      \leq \Join_{a\in f^{-1}(b_1)}
              \delta(a,c_1)\leq \chi(c_1),
  \end{equation*}
  as desired.

  Conversely, if \( f \) is an effective descent morphism in $\Ord//X$, then
  by Remark \ref{rem:stabregepi.to.edm} it follows that both \( \rho_1(U(f))
  \) and \( \rho_2(U(f)) \) are effective descent morphisms in \( \Ord \) and
  \( \FamX \), respectively, from which we conclude that \( U(f) \) is an
  effective descent morphism in $\Ord \times_\Set \Fam(X)$ (by Lemma
  \ref{lem:inbetween}), and that condition \ref{enum:left} holds.

  To prove condition \ref{enum:right}, we apply Theorem \ref{th:obstruction}
  again: we let \( b_0 \leq b_1 \) and \( w \leq \beta(b_0) \), and we
  consider the pair \( \big( \{b_0,b_1\}, \big(\chi_{b_0},\chi_{b_1})\big) \),
  where
  \begin{equation*}
    \chi_{b_0}=w,\qquad
    \qquad
    \chi_{b_1}=\Join_{\substack{a_0 \leq a_1 \\ f(a_i)=b_i}} w\meet \alpha(a_0).
  \end{equation*}
  We also let
  \begin{equation*}
    g \colon \big(\{b_0, b_1\},(\chi_{b_0},\chi_{b_1})\big) \to (B,\beta)
  \end{equation*}
  be the inclusion. Taking the pullback of $U(f)$ along $g$, we obtain
  \begin{equation*}
    \begin{tikzcd}
      \big(D,(\xi_d)_{d\in D}\big)
        \ar[r] \ar[d]
        & \big(\{b_0,b_1\},(\chi_{b_0},\chi_{b_1})\big)
          \ar[d,"g"] \\
      U(A,\alpha)
        \ar[r,"U(f)",swap]
        & U(B,\beta),
    \end{tikzcd}
  \end{equation*}
  where $D=\{(a,b_i)\;|\;f(a)=b_i, \,i=0,1\}$, and \( \xi \) is given by
  \begin{itemize}[label=--]
    \item
      \( \xi_{(a,b_0)} = \alpha(a) \meet w \) for each \( a \in A \)
      such that \( f(a) = b_0 \), and
    \item
      \( \xi_{(a,b_1)} =\alpha(a)\meet \chi_{b_1} \) for each \( a \in A
      \) such that \( f(a) = b_1 \).
  \end{itemize}
  Hence, if $(a,b_0)\leq (a',b_1)$, then \( a \leq a' \) and \( f(a') =
  b_1 \). It follows that
  \begin{equation*}
    \alpha(a) \meet w \leq \chi_{b_1} \qquad \text{and} \qquad
    \alpha(a) \meet w \leq \alpha(a'),
  \end{equation*}
  and therefore \( \xi_{a,b_0} \leq \xi_{a',b_1} \). Monotonicity of \(
  \alpha \) covers the remaining cases (when \((a,b_i) \leq (a',b_i)\) for \(
  i=0,1 \)), and thereby we conclude that \( \xi \) is monotone. Thus, \( \chi
  \) must be monotone as well, so that \( \chi_{b_0} \iso \chi_{b_1} \),
  confirming that condition \ref{enum:right} holds. 
\end{proof}

\begin{corollary}
  \label{cor:char}
  Let \(X\) be a locally cartesian closed, locally complete ordered set with a
  bottom element. A morphism \( f \colon (A,\alpha) \to (B,\beta) \) in \(
  \Ord // X \) is an effective descent morphism if and only if
  \begin{enumerate}[label=(\arabic*)]
    \item
      \label{enum:left.2}
      \( f \colon A \to B \) is effective for descent in \( \Ord \); that is
      \begin{equation*}
        \forall b_0\leq b_1\leq b_2\text{ in }B\;\;\exists a_0\leq a_1\leq a_2
        \text{ in }A\colon\quad f(a_0)=b_0,\;f(a_1)=b_1,\;f(a_2)=b_2.
      \end{equation*}
    \item
      \label{enum:right.2}
      we have
      \begin{equation*}
        \forall \: b_0 \leq b_1, \quad
        \beta(b_0) \iso \Join_{\substack{a_0 \leq a_1 \\ f(a_i)=b_i}} \alpha(a_0),
      \end{equation*}
  \end{enumerate}
\end{corollary}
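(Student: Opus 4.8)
The plan is to derive Corollary~\ref{cor:char} from Theorem~\ref{th:char} by showing that, under the extra hypothesis of local cartesian closure, conditions \ref{enum:right} and \ref{enum:center} of the theorem collapse into the single condition \ref{enum:right.2}. Since condition \ref{enum:left} and \ref{enum:left.2} are verbatim identical, the only work lies in reconciling the descent/effectiveness conditions on the familial side. My strategy is to pass through the forgetful functor \( \Ord // X \to \FamX \) and invoke Lemma~\ref{lem:fam.desc}\eqref{enum:lcc}, which tells us that, when \(X\) is locally complete \emph{and} locally cartesian closed, a morphism in \(\FamX\) is effective for descent precisely when it is a descent morphism. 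Thus the effectiveness hypothesis \eqref{eq:effectiveness} becomes redundant, and only the descent condition \eqref{eq:join.distr} survives.

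First I would unwind what condition \ref{enum:right} of Theorem~\ref{th:char} asserts in \(\FamX\)-terms: it is exactly the descent condition \eqref{eq:join.distr} for \(\rho_2(U(f))\), restricted to the relevant fibres, as already observed in the proof of the theorem (where the chain of inequalities shows \ref{enum:right} forces \(w \iso \Join_{a_0 \in f^{-1}(b_0)} w \meet \alpha(a_0)\)). Next I would argue that, under local cartesian closure, condition \ref{enum:center} is automatically satisfied, appealing to Lemma~\ref{lem:fam.desc}\eqref{enum:lcc}: once \(\rho_2(U(f))\) is a descent morphism, it is automatically an effective descent morphism, so no separate verification of the effectiveness condition \ref{enum:center} is needed. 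Hence in the locally cartesian closed setting, Theorem~\ref{th:char} reduces to conditions \ref{enum:left} and \ref{enum:right} alone.

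It then remains to show that condition \ref{enum:right} simplifies to the cleaner form \ref{enum:right.2}. The implication from \ref{enum:right} to \ref{enum:right.2} is the special case \(w = \beta(b_0)\): taking that join and using \(\alpha(a_0) \leq \beta(b_0)\) for the indexing elements gives \(\beta(b_0) \iso \Join \alpha(a_0)\). For the converse, I would use distributivity of meets over joins, which holds in each \(\downarrow x\) precisely because it is cartesian closed (the functor \((-) \meet w\) is a left adjoint, hence preserves joins): from \(\beta(b_0) \iso \Join_{a_0 \leq a_1,\,f(a_i)=b_i} \alpha(a_0)\) we meet both sides with \(w \leq \beta(b_0)\) to obtain
\begin{equation*}
  w \iso w \meet \beta(b_0)
    \iso w \meet \Join_{\substack{a_0 \leq a_1 \\ f(a_i)=b_i}} \alpha(a_0)
    \iso \Join_{\substack{a_0 \leq a_1 \\ f(a_i)=b_i}} w \meet \alpha(a_0),
\end{equation*}
recovering condition \ref{enum:right}. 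This shows the two formulations are equivalent, completing the reduction.

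The main obstacle I anticipate is purely bookkeeping: ensuring that the joins in \eqref{eq:join.distr} and in \ref{enum:right} are taken over the correct index sets and that the suprema genuinely live inside a single \(\downarrow x\) where distributivity is available. Since all the elements \(w \meet \alpha(a_0)\) and \(\alpha(a_0)\) lie below \(\beta(b_0)\), and \(\downarrow \beta(b_0)\) is cartesian closed by hypothesis, the distributive law applies cleanly; the only care required is that the join of the \(\alpha(a_0)\) is computed in \(\downarrow \beta(b_0)\), which is guaranteed by condition \ref{enum:right.2} asserting that this join equals \(\beta(b_0)\). With these index-set matters handled, the argument is a direct translation of Lemma~\ref{lem:fam.desc}\eqref{enum:lcc} into the \(\Ord//X\) setting.
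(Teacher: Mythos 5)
Your proposal is correct, and its skeleton is the same as the paper's: derive the corollary from Theorem~\ref{th:char}, use distributivity in \( \downarrow \beta(b_0) \) to show that condition~\ref{enum:right.2} implies condition~\ref{enum:right} (meet both sides of \( \beta(b_0) \iso \Join \alpha(a_0) \) with \( w \)), and recover \ref{enum:right.2} from \ref{enum:right} by specializing to \( w = \beta(b_0) \); both of these computations appear verbatim in the paper. The one place you genuinely diverge is condition~\ref{enum:center}. The paper checks it directly: for any descent data \( (\sigma(a))_{a \in A} \), distributivity and \eqref{eq:descprop} give \( \alpha(a') \meet \Join_{a \in f^{-1}(b)} \sigma(a) \iso \Join_{a \in f^{-1}(b)} \sigma(a') \meet \alpha(a) \iso \sigma(a') \), an argument that needs no descent hypothesis at all. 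You instead deduce \ref{enum:center} from Lemma~\ref{lem:fam.desc}: condition~\ref{enum:right} makes \( \rho_2(U(f)) \) a descent morphism in \( \FamX \), Lemma~\ref{lem:fam.desc}\eqref{enum:lcc} upgrades it to an effective descent morphism, and Lemma~\ref{lem:fam.desc}\eqref{enum:char} then returns exactly the implication demanded by \ref{enum:center}. This is a legitimate and slightly more structural packaging -- indeed the paper proves Lemma~\ref{lem:fam.desc}\eqref{enum:lcc} from \eqref{enum:char} by the very same distributivity computation -- so the two routes rest on the same fact; the only (harmless) trade-off is that your verification of \ref{enum:center} is conditional on the descent condition already holding, whereas the paper's shows it holds unconditionally whenever \( X \) is locally cartesian closed.
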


\begin{proof}
  Since \(X\) is locally cartesian closed, local meets distribute over local
  joins, hence we have
  \begin{equation*}
    w \iso w \meet \beta(b_0) 
      \iso w \meet \Join_{\substack{a_0\leq a_1 \\ f(a_i)=b_i}} \alpha(a_0)
      \iso \Join_{\substack{a_0 \leq a_1 \\ f(a_i)=b_i}} w \meet \alpha(a_0)
  \end{equation*}
  for all \( w \leq \beta(b_0) \).  Moreover, given a family \( (\sigma_a)_{a
  \in A} \leq (\alpha_a)_{a \in A} \) satisfying \eqref{eq:descprop}, we have
  \begin{equation*}
    \alpha(a') \meet \Join_{a \in f^{-1}(b)} \sigma(a) 
      \iso \Join_{a \in f^{-1}(b)} \alpha(a') \meet \sigma(a) 
      \iso \Join_{a \in f^{-1}(b)} \sigma(a') \meet \alpha(a) 
      \iso \sigma(a')
  \end{equation*}
  for each \( b \in B \) and each \( a' \in f^{-1}(b) \).  Now, we may apply
  Theorem \ref{th:char}.
\end{proof}

\begin{examples}
  \label{ex:int}
  Let $X$ be the interval $[0,1]$ with the usual order -- we observe that
  \(X\) is a cartesian closed, complete ordered set.

  Let $A=\{(x,y)\in X^2\,;\,y<x\text{ or }y=x=0\}$, and write \( \alpha=\pi_2
  \), \( f = \pi_1 \) for the projections.

  \begin{enumerate}[label=(\Roman*)]
    \item
      If we equip \( A \) with the product order, then both \( \alpha \) and
      \(f\) are monotone, so that we have a morphism 
      \begin{equation}
        \label{eq:key.morph}
        \begin{tikzcd}[row sep=large]
          A \ar[rd,"\alpha"{name=A},swap]
            \ar[rr,shift right=6.5, phantom,"\leq"{anchor=center}]
            \ar[rr,"f"{name=B}] 
          && X \ar[ld,equal]\\
          & X
        \end{tikzcd}
      \end{equation}
      in \( \Ord // X \) -- indeed, we note that $\alpha(x,y)=y\leq
      f(x,y)=x$. Moreover,
      \begin{itemize}[label=--]
        \item
          $f$ is effective for descent in $\Ord$:
          \begin{equation}
            \label{eq:f.edm.ord}
            \forall x_0\leq x_1\leq x_2\text{ in }[0,1]\quad
            \exists (x_0,0)\leq (x_1,0)\leq (x_2,0)\text{ in }A\;:\;
            f(x_i,0)=x_i.
          \end{equation}
        \item
          If $0=x\leq x'$ then $(x,0)\leq (x',0)$ in $A$ and $0 =
          \alpha(x,0)$; if $0 < x \leq x'$, then, for all $0 \leq y < x$,
          $(x,y)\leq(x',y)$ in $A$ and clearly $ x = \bigvee \{\alpha(x,y)
          \,|\, 0 \leq y<x\}$, 
      \end{itemize}
      and these respectively correspond to conditions \ref{enum:left.2} and
      \ref{enum:right.2} of Corollary \ref{cor:char}. We conclude that \( f \)
      is an effective descent morphism in \( \Ord // X \). We highlight that
      not every $f_x$ is surjective for $x \in X$, so this $f$ is not under
      the conditions of \cite[Theorem 5.3]{CJ23}.
    \item
      If we consider on \( A \) the order defined by
      \begin{equation*}
        (x,y) \leq (x',y') \iff
          \quad (x,y) = (x',y') \quad \text{ or } \quad 
          x \leq x' \text{ and } y=y'=0,
      \end{equation*}
      then, once again, both \( \alpha \) and \(f\) are monotone, and \( f \)
      defines a morphism \eqref{eq:key.morph} in \( \Ord // X \). 

      Moreover, we note that \( f \) \textit{is an effective descent morphism}
      in \( \Ord \), since \eqref{eq:f.edm.ord} still holds, and that \(f\)
      \textit{is a stable regular epimorphism} in \( \Ord // X \), because we
      have \( f^{-1}(x) = \{ (x,y) \in A \;|\; 0 \leq y < x \} \), hence
      \begin{equation*}
        \forall x \in X \quad
        x \iso \Join_{y < x} y.
      \end{equation*}

      However, $ f $ \textit{is not effective for descent} in $\Ord //X$: if
      \( 0 < x < 1 \), then \( (x,y_0) \leq (1,y_1) \) in \(A\) only if \(
      y_0=y_1=0 \), hence 
      \begin{equation*}
        \Join_{(x,y_0) \leq (1,y_1)} \alpha(x,y_0) = \alpha(x,0) = 0 < x,
      \end{equation*}
      so \( f \) does not satisfy condition \ref{enum:right.2} of
      Corollary~\ref{cor:char}.
  \end{enumerate}
\end{examples}

\section{Redundancy of the bottom element}

Thanks to an observation due to G. Janelidze (private communication), we are
able to obtain the results of Proposition \ref{lem:stabregepi} and Theorem
\ref{th:char} even if \( X \) has no bottom element.

Each ordered set \(X\) is the coproduct \( X \iso \sum_{i\in I} X_i \) of its
connected components, where \( x, x' \in X \) belong to the same connected
component if there exists a zigzag
\begin{equation*}
  x = x_0 \leq x_1 \geq x_2 \leq \ldots \geq x_{n-1} \leq x_n = x'.
\end{equation*}
where \( x_i \in X \).

Each object \( (A, \alpha \colon A \to X) \) of \( \Ord // X \) induces a
family \( (A_i, \alpha_i \colon A_i \to X_i)_{i \in I} \) of objects belonging
to each \( \Ord // X_i \), while the (co)restrictions of \( f \colon
(A,\alpha) \to (B,\beta) \) define a family of morphisms \( (f_i \colon
(A_i,\alpha_i) \to (B_i,\beta_i))_{i\in I} \) belonging to each  \( \Ord //
X_i \). This defines a functor
\begin{equation*}
  \begin{tikzcd}
    \Ord // X \ar[r]
      & \prod_{i \in I} \Ord // X_i
  \end{tikzcd}
\end{equation*}
which is easily seen to be an equivalence.

\begin{lemma}
  \label{lem:gj}
  Let \( X \) be a locally complete ordered set. Its connected components are
  locally complete ordered sets with bottom element.  Moreover, if \( X \) is
  locally cartesian closed, then so is each component.
\end{lemma}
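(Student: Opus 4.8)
The plan is to reduce all three assertions to one structural observation about down-sets. Fix a component $X_i$ and a point $x \in X_i$. I claim that the down-set $\downarrow x$ computed in $X_i$ coincides with the down-set $\downarrow x$ computed in $X$: if $y \leq x$ in $X$, then the one-step zigzag $y \leq x$ places $y$ in the component of $x$, so $y \in X_i$; conversely, any $y \in X_i$ with $y \leq x$ lies in $\downarrow x$ by definition. Thus $\downarrow x$ carries the same order whether read in $X_i$ or in $X$. Since completeness and cartesian closedness of a down-set depend only on its internal order, local completeness of $X$ at once yields local completeness of $X_i$, and, when $X$ is locally cartesian closed, local cartesian closedness of $X_i$. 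This settles the first and third claims.

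The substantive point is the existence of a bottom element in each component. First I would note that, $X$ being locally complete, each $\downarrow x$ is a complete lattice and so has a least element $\bot_x$. The key step is to show that $\bot_x$ is \emph{invariant under comparability}: if $x \leq x'$, then on the one hand $\downarrow x \subseteq \downarrow x'$ gives $\bot_{x'} \leq \bot_x$; on the other hand $x \in \downarrow x'$ forces $\bot_{x'} \leq x$, so $\bot_{x'} \in \downarrow x$ and hence $\bot_x \leq \bot_{x'}$, whence $\bot_x \iso \bot_{x'}$.

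I would then propagate this invariance along zigzags. Given a witness $x = x_0 \leq x_1 \geq x_2 \leq \ldots \leq x_n = x'$ that $x$ and $x'$ lie in the same component, the previous step (applied in whichever direction each comparison points) yields $\bot_{x_j} \iso \bot_{x_{j+1}}$ at every stage, so $\bot_x \iso \bot_{x'}$. Hence $\bot := \bot_x$ is independent, up to $\iso$, of the chosen base point $x \in X_i$; it lies in $X_i$ since $\bot \leq x$, and for any $z \in X_i$ we obtain $\bot \iso \bot_z \leq z$, exhibiting $\bot$ as a bottom element of $X_i$.

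The argument is short, and I expect the only delicate point to be the preorder bookkeeping: least elements are determined only up to the equivalence $\iso$, so the comparability-invariance of $\bot_x$ must be phrased with $\iso$ rather than equality, and the final inequality $\bot \leq z$ is obtained by transitivity through the chain $\bot \leq \bot_z \leq z$. Beyond this, everything follows formally from the down-set identification together with completeness.
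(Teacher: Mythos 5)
Your proof is correct and follows essentially the same route as the paper's: both rest on the invariance $\bot_x \iso \bot_{x'}$ for $x \leq x'$ (the paper compresses your two inequalities into the single chain $\bot_x \leq \bot_{x'} \leq \bot_x \leq x \leq x'$), propagated along zigzags, together with the observation that down-sets of a component coincide with down-sets of $X$. You merely spell out the zigzag propagation and the final verification that the paper leaves implicit.
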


\begin{proof}
  For \( x \in X \), we denote the bottom element of the downset \( \downarrow
  x \) by \( \bot_x \), which exists by completeness.

  It is enough to confirm that if we have \( x \leq y \) in \( X \), then \(
  \bot_x \iso \bot_y \) in \( X \). And indeed this is the case: an immediate
  calculation shows that we have a chain
  \begin{equation*}
    \bot_x \leq \bot_y \leq \bot_x \leq x \leq y,
  \end{equation*}
  confirming our statement.

  Finally, we note that the downsets of the connected components of \(X\)
  coincide with the downsets of \(X\).
\end{proof}

Lemma \ref{lem:gj}, together with the observations that \( \Ord // \sum_{i \in
I} X_i \eqv \prod_{i \in I} \Ord // X_i \) and that descent properties on
products are encoded by descent properties on the components, we conclude
that:

\begin{theorem}
  \label{thm:stabregepi.2}
  Let \(X\) be a locally complete ordered set, and let \( f \colon (A,\alpha)
  \to (B,\beta) \) be a morphism in \( \Ord // X \).
  \begin{enumerate}
     \item 
     \label{enum:regepi}    
      \(f\) is a regular epimorphism in \( \Ord // X \) if and only if it is a
      regular epimorphism in \( \Ord \) and
      \begin{equation*}
        \forall b \in B, \quad
          \beta(b) \iso \Join_{f(a)\leq b} \alpha(a).
      \end{equation*}
    \item
      \label{enum:stabregepi}
      \(f\) is a stable regular epimorphism in \( \Ord // X \) if and only if
      it is a stable regular epimorphism in \( \Ord \) and
      \begin{equation*}
        \forall b\in B,\;\;\forall w\leq\beta(b),\quad
          w \iso \Join_{f(a)=b} w \meet \alpha(a).
      \end{equation*}
  \end{enumerate}
\end{theorem}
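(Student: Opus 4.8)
The plan is to reduce the statement to Proposition~\ref{lem:stabregepi} by means of the equivalence $\Ord // X \eqv \prod_{i \in I} \Ord // X_i$, where the $X_i$ are the connected components of $X$. By Lemma~\ref{lem:gj}, each $X_i$ is a locally complete ordered set with a bottom element, so Proposition~\ref{lem:stabregepi} applies to every component. The whole argument is then a matter of transporting the componentwise characterizations through this equivalence.

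First I would record the decomposition of objects and morphisms. Since each connected component $X_i$ is both up- and down-closed, setting $A_i = \alpha^{-1}(X_i)$ yields a coproduct decomposition $A \iso \sum_i A_i$ in $\Ord$, and likewise $B \iso \sum_i B_i$. The inequality $\alpha(a) \leq \beta(f(a))$ forces $f(A_i) \subseteq B_i$, so $f$ restricts to morphisms $f_i \colon (A_i,\alpha_i) \to (B_i,\beta_i)$ in $\Ord // X_i$, and under the equivalence $f$ corresponds to the family $(f_i)_{i \in I}$.

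Next, I would invoke the fact, noted just before the statement, that descent properties on a product are encoded componentwise: $f$ is a regular epimorphism (respectively, a stable regular epimorphism) in $\Ord // X$ if and only if each $f_i$ is so in $\Ord // X_i$. Applying Proposition~\ref{lem:stabregepi} to every $f_i$ rewrites this, for each part, as the conjunction over $i \in I$ of two componentwise conditions: that $f_i$ is a regular (respectively, stable regular) epimorphism in $\Ord$, and that the corresponding join identity holds in $X_i$.

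It remains to reassemble these componentwise conditions into the global ones in the statement, and this is the step requiring care. Since $A \iso \sum_i A_i$ and $B \iso \sum_i B_i$ carry no order relations across distinct components, a coproduct of maps is a regular (respectively, stable regular) epimorphism in $\Ord$ exactly when each $f_i$ is, and the quantifiers ``$\forall b_0 \leq b_1$'' and ``$\forall b$'' only ever range within a single component; thus the $\Ord$-level conditions match. For the join identities I would use the final observation of Lemma~\ref{lem:gj}, that the downsets of $X$ coincide with those of its components: for $b \in B_i$, any $a$ with $f(a) \leq b$ (or $f(a) = b$) necessarily lies in $A_i$, and the relevant join is formed in $\downarrow \beta(b)$, which is the same complete set whether taken in $X$ or in $X_i$. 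Hence each global join identity is equivalent to the conjunction of its componentwise versions, and combining everything gives the two biconditionals. The main obstacle is precisely this bookkeeping, namely checking that every indexed condition localizes correctly to a single connected component and that the joins are insensitive to whether they are computed in $X$ or in the relevant $X_i$.
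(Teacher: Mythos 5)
Your proof is correct and takes essentially the same route as the paper: the authors likewise deduce this theorem from Proposition~\ref{lem:stabregepi} via the equivalence \( \Ord // X \eqv \prod_{i \in I} \Ord // X_i \), Lemma~\ref{lem:gj}, and the fact that (stable) regular epimorphisms in a product of categories are detected componentwise. The bookkeeping you spell out -- that the order-theoretic and join conditions localize to single components, and that joins agree in \(X\) and \(X_i\) because their downsets coincide -- is exactly what the paper leaves implicit in its one-sentence derivation.
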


\begin{theorem}
  \label{thm:char.2}
  Let \(X\) be a locally complete ordered set. A morphism \( f \colon (A,
  \alpha) \to (B, \beta) \) is effective for descent in \( \Ord // X \) if and
  only if
  \begin{enumerate}[label=(\arabic*)]
    \item
      $f\colon A\to B$ is effective for descent in $\Ord$; that is
      \begin{equation*}
        \forall b_0\leq b_1\leq b_2\text{ in }B\;\;\exists a_0\leq a_1\leq a_2
        \text{ in }A\colon\quad f(a_0)=b_0,\;f(a_1)=b_1,\;f(a_2)=b_2.
      \end{equation*}
    \item
      we have
      \begin{equation*}
        \forall\: b_0\leq b_1,\:\forall\: w \leq \beta(b_0), \quad
        w = \Join_{\substack{a_0 \leq a_1 \\ f(a_i)=b_i}} w \meet \alpha(a_0).
      \end{equation*}
    \item
      for every family \( (\sigma(a))_{a \in A} \leq (\alpha(a))_{a \in A} \)
      satisfying 
      \begin{equation*}
        \forall \,b\in B,\;\; \forall \, a,a' \in f^{-1}(b), \qquad
        \sigma(a') \meet \alpha(a) \iso \alpha(a') \meet \sigma(a),
      \end{equation*}
      we have
      \begin{equation*}
        \forall \,b\in B,\;\; \forall \, a' \in f^{-1}(b), \qquad
        \alpha(a') \meet \Join_{a \in A} \sigma(a) \iso \sigma(a').
      \end{equation*}
  \end{enumerate}
\end{theorem}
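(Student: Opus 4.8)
The plan is to reduce Theorem \ref{thm:char.2} to Theorem \ref{th:char} through the decomposition of \(X\) into connected components. Writing \(X \iso \sum_{i \in I} X_i\), the equivalence \(\Ord // X \eqv \prod_{i \in I} \Ord // X_i\) recorded above sends a morphism \(f \colon (A,\alpha) \to (B,\beta)\) to the family \((f_i \colon (A_i,\alpha_i) \to (B_i,\beta_i))_{i \in I}\), where \(A_i = \alpha^{-1}(X_i)\) and \(B_i = \beta^{-1}(X_i)\); since \(\alpha\) and \(\beta\) are monotone, comparable elements of \(A\) (of \(B\)) share a component, so each \(A_i\) and \(B_i\) is a union of connected components and \(f\) genuinely restricts to \(f_i\). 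By Lemma \ref{lem:gj} every \(X_i\) is locally complete with a bottom element, so Theorem \ref{th:char} applies to each \(f_i\).

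First I would invoke the standard fact -- already alluded to before the statement -- that a morphism of a product category \(\prod_{i \in I} \C_i\) is effective for descent exactly when each of its components is: pullbacks, the monads \(T^{p_i}\) and the comparison functors are all formed coordinatewise, so \(K^{p}\) is identified with \(\prod_{i} K^{p_i}\), an equivalence precisely when every factor is. Combined with the equivalence above, \(f\) is effective for descent in \(\Ord // X\) if and only if each \(f_i\) is effective for descent in \(\Ord // X_i\), and for the latter I may substitute the conjunction of conditions \ref{enum:left}--\ref{enum:center} of Theorem \ref{th:char}, quantified over all \(i \in I\).

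The remaining task is to check that these per-component conditions reassemble into conditions (1)--(3) of the present statement. The governing observation is that all the order data occurring in those conditions is confined to a single component: a chain \(b_0 \leq b_1 \leq b_2\), a pair \(b_0 \leq b_1\), or a fibre \(f^{-1}(b)\) lies inside one \(B_i\) or \(A_i\); any \(w \leq \beta(b_0)\) lies in the corresponding \(X_i\); and, by the last line of the proof of Lemma \ref{lem:gj}, the downset \(\downarrow \beta(b_0)\) -- which is complete -- is the same whether computed in \(X\) or in \(X_i\). Consequently the joins and meets in conditions (2) and (3) are unambiguous and coincide with those produced by Theorem \ref{th:char} on the relevant component, so that conditions (1) and (2) transcribe at once.

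The one delicate point, which I expect to be the main obstacle, is the join \(\Join_{a \in A} \sigma(a)\) in condition (3). Read literally it ranges across components and need not exist in \(X\); it is to be understood through its meet with \(\alpha(a')\), which forces the computation into the complete downset \(\downarrow \beta(b)\). I would argue that, for fixed \(b\) and \(a' \in f^{-1}(b)\), an index \(a\) outside the component of \(a'\) contributes nothing, since a meet \(\alpha(a') \meet \sigma(a)\) across distinct components admits no lower bound; the expression thus collapses to the quantity computed inside the single component \(X_i\) containing \(b\), where it matches the effectiveness clause of Theorem \ref{th:char} for \(f_i\) -- there the fibre of \(f_i\) over \(b\) is exactly \(f^{-1}(b)\), and the descent-data hypothesis already constrains only elements of individual fibres. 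Making this identification precise -- confirming that the meet with \(\alpha(a')\) confines the join to the fibre, in agreement with Theorem \ref{th:char} -- is the step that must be carried out with care; once it is in place, the componentwise equivalence assembles into the stated biconditional.
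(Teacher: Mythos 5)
Your overall strategy is exactly the paper's: the paper proves this theorem in one stroke by combining Lemma \ref{lem:gj}, the equivalence \( \Ord // X \eqv \prod_{i \in I} \Ord // X_i \) over the connected components, the fact that effective descent morphisms in a product category are detected componentwise, and Theorem \ref{th:char} applied to each \( f_i \). Your transcription of conditions (1) and (2) across this equivalence is also correct, since chains, fibres, and downsets \( \downarrow \beta(b_0) \) all live in a single component.

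However, the step you flag as ``the one delicate point'' is not merely delicate: it cannot be carried out, because the meet with \( \alpha(a') \) does \emph{not} confine the join \( \Join_{a \in A} \sigma(a) \) to the fibre \( f^{-1}(b) \), even inside a single component. Take \( X = \N \) with its usual order (a connected, locally complete chain), and let \( f \) be the identity on \( (\N, \id) \), which is certainly effective for descent. Every fibre is a singleton, so the descent-data hypothesis in (3) is vacuous and any family with \( \sigma(a) \leq a \) qualifies; take \( \sigma(a) = a \) for \( a \) even and \( \sigma(a) = 0 \) for \( a \) odd. Then \( \Join_{a \in A} \sigma(a) \) does not exist in \( \N \), and under the ``join of meets'' reading one gets \( \Join_{a \in A} \alpha(a') \meet \sigma(a) = a' \neq 0 = \sigma(a') \) for odd \( a' \). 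So condition (3), read literally with the join over all of \( A \), fails for an identity morphism: the literal statement is false, and no refinement of your collapse argument can rescue it. What is true --- and what the componentwise reduction actually yields, since Theorem \ref{th:char}(3) has the join over \( f^{-1}(b) \) --- is the version of (3) with \( \Join_{a \in f^{-1}(b)} \sigma(a) \). There the join is unproblematic: for \( a \in f^{-1}(b) \) one has \( \sigma(a) \leq \alpha(a) \leq \beta(b) \), so the join exists in the complete downset \( \downarrow \beta(b) \), and all data lies in the component of \( b \), so your componentwise transcription goes through verbatim. In short, the ``\( a \in A \)'' in the statement should be ``\( a \in f^{-1}(b) \)'' (a slip that also propagates to the Appendix); your argument is precisely the paper's proof of that corrected statement, but the bridge from the fibre join to the all-of-\( A \) join that your plan still owes does not exist.
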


\begin{corollary}
  Let \( X \) be a locally complete, locally cartesian closed ordered set with
  a bottom element.  A morphism \( f \colon (A,\alpha) \to (B,\beta) \) is
  effective for descent in \( \Ord // X \) if and only if 
  \begin{enumerate}[label=(\arabic*)]
    \item
      \label{enum:left.3}
      \( f \colon A \to B \) is effective for descent in \( \Ord \); that is
      \begin{equation*}
        \forall b_0\leq b_1\leq b_2\text{ in }B\;\;\exists a_0\leq a_1\leq a_2
        \text{ in }A\colon\quad f(a_0)=b_0,\;f(a_1)=b_1,\;f(a_2)=b_2.
      \end{equation*}
    \item
      \label{enum:right.3}
      we have
      \begin{equation*}
        \forall \: b_0 \leq b_1, \quad
        \beta(b_0) \iso \Join_{\substack{a_0 \leq a_1 \\ f(a_i)=b_i}} \alpha(a_0).
      \end{equation*}
  \end{enumerate}
\end{corollary}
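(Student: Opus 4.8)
The plan is to read this corollary off directly from Theorem~\ref{thm:char.2} by the same distributivity argument that produced Corollary~\ref{cor:char} from Theorem~\ref{th:char}; since \(X\) carries a bottom element, there is no need to pass through connected components. As \(X\) is locally complete, Theorem~\ref{thm:char.2} applies and characterizes the effective descent morphisms of \(\Ord // X\) by its three conditions. Its condition (1) is verbatim condition~\ref{enum:left.3} here, so everything reduces to showing that, under local cartesian closedness, conditions (2) and (3) of Theorem~\ref{thm:char.2} together amount to the single condition~\ref{enum:right.3}.

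For the direction from the theorem to the corollary, I would specialize condition (2) of Theorem~\ref{thm:char.2} to \(w = \beta(b_0)\). Because \(f\) is a morphism of \(\Ord // X\), every index \(a_0\) with \(f(a_0) = b_0\) satisfies \(\alpha(a_0) \leq \beta(f(a_0)) = \beta(b_0)\), whence \(\beta(b_0) \meet \alpha(a_0) \iso \alpha(a_0)\) and condition~\ref{enum:right.3} follows at once.

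For the converse, I would invoke that local cartesian closedness makes meets distribute over the joins taken inside the complete downset \(\downarrow \beta(b_0)\). Condition (2) of the theorem then follows from condition~\ref{enum:right.3} via
\[
  w \iso w \meet \beta(b_0)
    \iso w \meet \Join_{\substack{a_0 \leq a_1 \\ f(a_i)=b_i}} \alpha(a_0)
    \iso \Join_{\substack{a_0 \leq a_1 \\ f(a_i)=b_i}} w \meet \alpha(a_0)
\]
for every \(w \leq \beta(b_0)\). For condition (3), given descent data \((\sigma(a))_{a \in A} \leq (\alpha(a))_{a \in A}\) satisfying the fibrewise compatibility \(\sigma(a') \meet \alpha(a) \iso \alpha(a') \meet \sigma(a)\), distributivity together with this compatibility gives
\[
  \alpha(a') \meet \Join_{a \in f^{-1}(b)} \sigma(a)
    \iso \Join_{a \in f^{-1}(b)} \alpha(a') \meet \sigma(a)
    \iso \Join_{a \in f^{-1}(b)} \sigma(a') \meet \alpha(a)
    \iso \sigma(a') \meet \Join_{a \in f^{-1}(b)} \alpha(a),
\]
and the final meet collapses to \(\sigma(a')\) since \(\sigma(a') \leq \alpha(a') \leq \Join_{a \in f^{-1}(b)} \alpha(a)\), the element \(\alpha(a')\) appearing among the joinands as \(a' \in f^{-1}(b)\).

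The argument is routine once Theorem~\ref{thm:char.2} is in hand; the only point demanding attention is that every join above is formed inside a downset \(\downarrow \beta(b)\), which is complete by local completeness and cartesian closed by hypothesis, so both the joins and the distributive law are legitimate even though \(X\) need be neither complete nor cartesian closed as a whole. I expect no genuine obstacle: with a bottom element present, this corollary records exactly the data of Corollary~\ref{cor:char}, and the proof differs only in citing Theorem~\ref{thm:char.2} in place of Theorem~\ref{th:char}.
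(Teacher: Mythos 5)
Your proof is correct and takes essentially the same route as the paper, which states this corollary without separate proof as an instance of the distributivity argument of Corollary \ref{cor:char} applied to Theorem \ref{thm:char.2} -- exactly your reduction. Your explicit specialization \( w = \beta(b_0) \) for the forward direction, and your remark that all joins and the distributive law live inside the complete, cartesian closed downsets \( \downarrow \beta(b) \), merely make explicit what the paper leaves implicit.
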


\appendix
\section{Antisymmetry}

Let \( \Pos \) be the full subcategory of \( \Ord \) consisting of the
ordered, antisymmetric sets (\textit{posets}). Likewise, for a poset \(X\), we
denote by \( \Pos // X \) the full subcategory of \( \Ord // X \) consisting
of those pairs \( (A,\alpha \colon A \to X) \) in \( \Ord // X \) such that \(
A \) is a poset. We observe that descent theory in \( \Pos // X \) can be
carried out just as in \( \Ord // X \). 

\begin{lemma}
  \label{lem:ap}
  The fully faithful functor \( F \colon \Pos \to \Ord \) preserves and
  reflects effective descent morphisms.
\end{lemma}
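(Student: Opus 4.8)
The plan is to verify that $F\colon\Pos\to\Ord$ satisfies the hypotheses of Theorem~\ref{th:obstruction} and then treat reflection and preservation separately. First I would record two routine facts. $F$ is fully faithful by definition, and $F$ preserves pullbacks: a pullback in $\Ord$ of a cospan of posets is again antisymmetric, since $(a,c)\le(a',c')$ and $(a',c')\le(a,c)$ force $a=a'$ and $c=c'$ by antisymmetry of the two factors. Hence $\Pos$ is closed under pullbacks in $\Ord$, pullbacks in $\Pos$ are computed as in $\Ord$, and I would also note that every descent morphism (in particular every effective descent morphism) is surjective.

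For the \emph{reflects} direction, suppose $F(f)$ is effective for descent in $\Ord$. Since $F$ is fully faithful and preserves pullbacks, Theorem~\ref{th:obstruction} applies, so it suffices to show that in any pullback square in $\Ord$ of $F(f)$ along a morphism $g\colon E\to B$ whose pullback corner $F(C)=A\times_B E$ is a poset, the object $E$ is itself a poset. Suppose not, and pick $e\ne e'$ with $e\le e'$ and $e'\le e$. Then $g(e)\le g(e')\le g(e)$, so $g(e)=g(e')=:b$ by antisymmetry of $B$; as $f$ is surjective there is $a\in A$ with $f(a)=b$, whence $(a,e)$ and $(a,e')$ both lie in $A\times_B E$, are mutually comparable, yet distinct, contradicting antisymmetry of the pullback corner. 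Thus condition (ii) of Theorem~\ref{th:obstruction} holds and $f$ is effective for descent in $\Pos$.

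For the \emph{preserves} direction I would dispose first of the descent (stable regular epimorphism) layer and then of effectiveness. A surjective monotone map of posets is a regular epimorphism in $\Pos$ exactly when it is one in $\Ord$: the codomain order generated by the images is automatically antisymmetric, being a subrelation of the antisymmetric $\le_B$, so the $\Ord$- and $\Pos$-coequalizers of the kernel pair coincide. Consequently, if $f$ is a stable regular epimorphism in $\Pos$, pulling back along the poset inclusion $\mathbf 2=\{0\le1\}\hookrightarrow B$ determined by any $b_0\le b_1$ shows that $b_0\le b_1$ lifts to a single step $a_0\le a_1$ in $A$; this single-step lifting makes the pullback of $f$ along an arbitrary $\Ord$-morphism into $B$ a regular epimorphism, so $f$ is a stable regular epimorphism in $\Ord$ as well. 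For effectiveness I would exploit pullback-stability of effective descent: pulling $f$ back along $c\colon\mathbf 3=\{0\le1\le2\}\to B$ induced by any $b_0\le b_1\le b_2$ yields an effective descent morphism $P\to\mathbf 3$ in $\Pos$, and it is then enough to lift $0\le1\le2$ to a chain in $P$, which projects to the chain required by the Janelidze--Sobral criterion \cite{JS}; that criterion then gives that $F(f)$ is effective for descent in $\Ord$.

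The main obstacle is exactly this last point: showing that an effective descent morphism over $\mathbf 3$ in $\Pos$ admits a lift of the full chain $0\le1\le2$. The two lower steps lift by the descent argument above, but gluing them at the middle vertex is precisely what effectiveness must supply. I would handle it by exhibiting, in the event that no such lift exists, the Janelidze--Sobral obstruction datum built from the unliftable $3$-chain as a descent datum all of whose constituents are posets, so that it lives in the descent category computed in $\Pos$ and yet fails to descend to an object of $\Pos/B$, contradicting effectiveness in $\Pos$ and forcing the lift. Verifying that this obstruction datum is genuinely antisymmetric and genuinely non-effective \emph{inside} $\Pos$, rather than only inside $\Ord$, is the delicate step, and it is what makes the coincidence of effective descent in the two categories more than a formality.
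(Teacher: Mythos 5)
Your \emph{reflection} direction is complete and correct, and it follows the same route as the paper: verify that $F$ is fully faithful and preserves pullbacks, then apply Theorem~\ref{th:obstruction}, showing that any $E$ occurring in a pullback of $F(f)$ with poset corner must itself be antisymmetric. In fact your justification is more careful than the paper's, which asserts only that a surjective monotone map $F(C)\to E$ forces $E$ to be a poset --- a claim that is false for general surjections out of posets; it is exactly the pullback structure you invoke (antisymmetry of $B$, surjectivity of $f$, and antisymmetry of $A\times_B E$ applied to the pair $(a,e)$, $(a,e')$) that makes the argument go through.

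The \emph{preservation} direction, however, contains a genuine gap, and you flag it yourself. You reduce, via the Janelidze--Sobral criterion, to showing that an effective descent morphism in $\Pos$ lifts $3$-chains, and you correctly dispose of the $2$-chain (stable regular epimorphism) layer. But the crucial step --- producing, from an unliftable chain $b_0\leq b_1\leq b_2$, a descent datum in $\Pos$ that fails to descend to $\Pos/B$ --- is only announced, not carried out; you call it ``the delicate step'' and leave it unverified, and that step is where the entire content of the statement lies. The paper closes it by citing \cite[Proposition~3.2]{JS} applied to the embedding $\Pos\to\Rel$: the obstruction object attached to an unliftable $3$-chain is the set $\{e_0,e_1,e_2\}$ over $B$ with relation generated by $e_0\to e_1\to e_2$ and \emph{no} arrow $e_0\to e_2$; it is not transitive, so it lives in $\Rel/B$ but in neither $\Ord/B$ nor $\Pos/B$. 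When no lift exists, its pullback along $f$ is reflexive, transitive and antisymmetric, hence a poset, so the obstruction criterion for $\Pos\to\Rel$ (legitimate because $2$-chain lifting makes the image of $f$ effective for descent in $\Rel$) forces the obstruction object itself to be a poset --- a contradiction. Your plan of running this entirely inside $\Pos$ as a non-descending descent datum can indeed be completed (the pullback just described, with its canonical descent structure, is such a datum, and effectiveness together with $2$-chain lifting yields the contradiction), but as written the proposal defers precisely this verification, so the preservation half of the lemma remains unproved.
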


\begin{proof}
  \(J\) has a left adjoint, given by the posetal reflection of an ordered set
  (equivalence classes of isomorphic elements). 

  Let \( f \colon A \to B \) be a morphism in \( \Pos \). If \( F(f) \) is an
  effective descent morphism in \( \Ord \), we note that condition (ii) of
  Theorem \ref{th:obstruction} holds unconditionally, since any surjective
  monotone map \( F(C) \to E \) in \( \Ord \) implies that \( E \) is a poset.
  Thus, \( f \) is an effective descent morphism in \( \Pos \).

  The preservation follows by \cite[Proposition 3.2]{JS} applied to the
  embedding \( \Pos \to \Rel \).
\end{proof}

We conclude that a monotone map \( f \colon X \to Y \) is an effective descent
morphism in \( \Pos \) if and only if it is an effective descent morphism in
\( \Ord \). Using this fact, we can carry out the results of
Proposition~\ref{lem:stabregepi}, and Theorems \ref{th:char},
\ref{thm:stabregepi.2}, \ref{thm:char.2} in the setting of posets, with
minimal changes:

\begin{theorem}
  Let \( X \) be a locally complete poset, and let \( f \colon (A,\alpha) \to
  (B,\beta) \) be a morphism in \( \Pos // X \). We have that:
  \begin{enumerate}
    \item
      \label{enum:regepi.tnd}
      \(f\) is a regular epimorphism if and only if it is a regular
      epimorphism in \( \Pos \) and 
      \begin{equation*}
        \forall b \in B, \quad
          \beta(b) = \Join_{f(a)\leq b} \alpha(a).
      \end{equation*}
    \item
      \label{enum:stabregepi.tnd}
      \(f\) is a stable regular epimorphism if and only if it is a stable
      regular epimorphism in \( \Pos \) and
      \begin{equation*}
        \forall b\in B,\;\;\forall w\leq\beta(b),\quad
          w = \Join_{f(a)=b} w \meet \alpha(a).
      \end{equation*}
    \item
      \label{enum:edm.tnd}
      \(f\) is an effective descent morphism if and only if 
      \begin{itemize}[label=--]
        \item
          \( f \) is an effective descent morphism in \( \Pos \), 
        \item
          we have
          \begin{equation*}
            \forall \: b_0 \leq b_1, \: \forall w \leq \beta(b_0), \quad
            w = \Join_{\substack{a_0 \leq a_1 \\ f(a_i)=b_i}} 
                  w \meet \alpha(a_0).
          \end{equation*}
        \item
          for all \( (\sigma(a))_{a \in A} \leq (\alpha(a))_{a \in A} \)
          satisfying
          \begin{equation*}
            \forall \, b \in B, \;\;
            \forall \, a, a' \in f^{-1}(b), \quad
            \sigma(a') \meet \alpha(a) \iso \alpha(a') \meet \sigma(a),
          \end{equation*}
          we have
          \begin{equation*}
            \forall \, b \in B, \;\;
            \forall \, a' \in f^{-1}(b), \quad
            \alpha(a') \meet \Join_{a \in A} \sigma(a)
              \iso \sigma(a').
          \end{equation*}
      \end{itemize}
 \end{enumerate}
\end{theorem}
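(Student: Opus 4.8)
The plan is to transport the proofs of Proposition~\ref{lem:stabregepi} and Theorem~\ref{th:char} from $\Ord$ to $\Pos$, making the one substitution that is not purely formal legitimate by Lemma~\ref{lem:ap}. The guiding observation is that none of the auxiliary data changes in the antisymmetric setting: the category $\FamX$ depends only on the poset $X$, so Lemma~\ref{lem:fam.desc} applies unchanged; and pullbacks in $\Pos$, in $\Pos // X$, and in the pseudo-pullback $\Pos \times_\Set \FamX$ are computed exactly as their $\Ord$-counterparts, since $\Pos$ is closed under finite limits in $\Ord$. Consequently antisymmetry is preserved automatically by every construction occurring in the original arguments, and what remains is bookkeeping.

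First I would dispose of the case in which $X$ has a bottom element. For the regular- and stable-regular-epimorphism statements I would re-run the proof of Proposition~\ref{lem:stabregepi}: the pullback squares exhibited there have posetal apexes, being pullbacks of posets, so the same computations show that the familial conditions $\beta(b) \iso \bigvee_{f(a) \leq b} \alpha(a)$ and $w \iso \bigvee_{f(a)=b} w \meet \alpha(a)$ cut out precisely the regular and stable regular epimorphisms as computed in $\Pos$. For the effective-descent statement I would mirror the proof of Theorem~\ref{th:char}, applying Theorem~\ref{th:obstruction} to the fully faithful, pullback-preserving embedding $U \colon \Pos // X \to \Pos \times_\Set \FamX$.

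This last step rests on the $\Pos$-analogue of Lemma~\ref{lem:inbetween}, asserting that a morphism is effective for descent in $\Pos \times_\Set \FamX$ precisely when its two projections are effective for descent in $\Pos$ and in $\FamX$; this follows from the same pseudo-pullback argument, since $\Pos \to \Set$ is still an isofibration, with Lemma~\ref{lem:ap} guaranteeing that effective descent in $\Pos$ agrees with effective descent in $\Ord$. The decisive point is then the obstruction condition of Theorem~\ref{th:obstruction}: given a pullback of $U(f)$ along some $g \colon (C,\chi) \to U(B,\beta)$, one must check that the apex lies in the essential image of $U$. Just as in Theorem~\ref{th:char}, the second condition forces $\chi \colon C \to X$ to be monotone, which is the only requirement for membership in the image; antisymmetry of the underlying order of the apex is free, that order being a pullback of posets. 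Hence the monotonicity argument transcribes word for word.

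Finally I would discharge the bottom-element hypothesis by the connected-components device of Theorems~\ref{thm:stabregepi.2} and~\ref{thm:char.2}. By Lemma~\ref{lem:gj} each connected component $X_i$ of a locally complete poset $X$ is itself a locally complete poset with a bottom element, and the equivalence $\Pos // X \eqv \prod_{i \in I} \Pos // X_i$ — which restricts from the ordered case because a connected component of a poset is a poset — lets the component-wise characterizations assemble into the stated one. I expect the only genuine obstacle to be the $\Pos$-version of Lemma~\ref{lem:inbetween}: everything else is transcription, but one must confirm that the isofibration and lextensivity hypotheses feeding \cite[Corollary~9.6]{Luc} and \cite[Corollary~2.6]{CJ23} survive the restriction to antisymmetric objects, which they do since $\Pos \to \Set$ remains an isofibration and $\FamX$ is left untouched.
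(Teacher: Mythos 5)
Your proposal is correct and takes essentially the same route as the paper: the paper also treats this theorem as a transcription of Proposition~\ref{lem:stabregepi}, Theorem~\ref{th:char} and the connected-component reduction of Theorems~\ref{thm:stabregepi.2} and~\ref{thm:char.2}, made legitimate by Lemma~\ref{lem:ap} and the fact that $\Fam(X)$ and all relevant pullback constructions are unaffected by antisymmetry. Your write-up in fact spells out the ``minimal changes'' (closure of $\Pos$ under finite limits in $\Ord$, the $\Pos$-analogue of Lemma~\ref{lem:inbetween}, the essential-image check) more explicitly than the paper does.
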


\begin{corollary}
  Let \( X \) be locally a frame -- that is, \( \downarrow x \) is a frame for
  all \( x \in X \) -- and let \( f \colon (A,\alpha) \to (B,\beta) \) be a morphism
  in \( \Pos // X \). We have that \( f \) is an effective descent morphism if
  and only if 
  \begin{enumerate}[label=(\arabic*)]
    \item
      \label{enum:left.4}
      \( f \colon A \to B \) is effective for descent in \( \Pos \); that is
      \begin{equation*}
        \forall b_0\leq b_1\leq b_2\text{ in }B,\;\;\exists a_0\leq a_1\leq a_2
        \text{ in }A\colon\quad f(a_0)=b_0,\;f(a_1)=b_1,\;f(a_2)=b_2.
      \end{equation*}
    \item
      \label{enum:right.4}
      we have
      \begin{equation*}
        \forall \: b_0 \leq b_1, \quad
        \beta(b_0) = \Join_{\substack{a_0 \leq a_1 \\ f(a_i)=b_i}} \alpha(a_0).
      \end{equation*}
  \end{enumerate}
\end{corollary}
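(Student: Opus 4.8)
The plan is to recognise this corollary as the exact poset counterpart of Corollary \ref{cor:char}, obtained from the effective descent characterisation of the preceding Theorem by the same local-distributivity reduction. Two observations make the transport possible. First, an antisymmetric complete cartesian closed lattice is precisely a frame, so the hypothesis that \(X\) is locally a frame says exactly that \(X\) is locally complete and locally cartesian closed in \(\Pos\); in particular, finite meets distribute over arbitrary joins inside every downset \(\downarrow x\). Second, by Lemma \ref{lem:ap} the effective descent morphisms of \(\Pos\) coincide with those of \(\Ord\), so the characterisation of item \ref{enum:edm.tnd} of the preceding Theorem applies here, and its first clause is literally condition \ref{enum:left.4}.

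It then suffices to show that, under local distributivity, condition \ref{enum:right.4} is equivalent to the conjunction of the two remaining clauses of item \ref{enum:edm.tnd}, namely the join-condition on \(w\leq\beta(b_0)\) and the effectiveness condition on descent data (read fibre-wise, as in Lemma \ref{lem:fam.desc} and Corollary \ref{cor:char}). For the implication \ref{enum:right.4} \(\Rightarrow\) these clauses I would argue exactly as in the proof of Corollary \ref{cor:char}: meeting the identity of \ref{enum:right.4} with an arbitrary \(w\leq\beta(b_0)\) and distributing yields \(w\simeq\Join w\meet\alpha(a_0)\); and for any descent datum \((\sigma(a))_{a}\leq(\alpha(a))_{a}\) satisfying \eqref{eq:descprop}, distributing the meet over the fibre join and applying \eqref{eq:descprop} termwise gives \(\alpha(a')\meet\Join_{a\in f^{-1}(b)}\sigma(a)\simeq\sigma(a')\), the last step using \(\sigma(a')\leq\alpha(a')\leq\Join_{a\in f^{-1}(b)}\alpha(a)\). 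For the converse, condition \ref{enum:right.4} is recovered as the instance \(w=\beta(b_0)\) of the join-condition, since \(\beta(b_0)\meet\alpha(a_0)\simeq\alpha(a_0)\) whenever \(f(a_0)=b_0\). Combining the two directions, \(f\) is effective for descent if and only if \ref{enum:left.4} and \ref{enum:right.4} hold.

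The point requiring care, and the main obstacle, is that \(X\) is not assumed to have a bottom element, so a priori the joins in the effectiveness condition need not be computed inside a single frame \(\downarrow\beta(b)\) and the distributivity steps above are not immediately licensed. I would dispose of this exactly as in the reduction preceding Theorem \ref{thm:char.2}: via the poset form of the equivalence \(\Pos // X\simeq\prod_{i\in I}\Pos // X_i\) over the connected components of \(X\), together with the fact that effective descent and each of conditions \ref{enum:left.4}, \ref{enum:right.4} are detected componentwise, one reduces to the case where \(X\) is connected. By the poset form of Lemma \ref{lem:gj}, each component is then locally complete with a bottom element, remains locally a frame, and has the same downsets as \(X\); consequently every fibre join \(\Join_{a\in f^{-1}(b)}\alpha(a)\) and the meets taken with it are computed inside the single frame \(\downarrow\beta(b)\), where the distributivity computations are verbatim those of Corollary \ref{cor:char}. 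This completes the argument.
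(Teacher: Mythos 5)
Your proposal is correct and takes essentially the approach the paper intends (the paper leaves this corollary unproved, as a ``minimal changes'' transport of Corollary \ref{cor:char}): you deduce it from item (3) of the preceding theorem, using that ``locally a frame'' means locally complete and locally cartesian closed in the antisymmetric setting, that Lemma \ref{lem:ap} identifies effective descent in \( \Pos \) with effective descent in \( \Ord \), and that frame distributivity makes the descent-data condition automatic and collapses the \( w \)-indexed join condition into condition (2), whose converse is the instance \( w = \beta(b_0) \). One remark: your third paragraph guards against a non-issue, since every join occurring in the conditions is of a family bounded above by some \( \beta(b) \) (e.g. \( \sigma(a) \leq \alpha(a) \leq \beta(b) \) for \( a \in f^{-1}(b) \), reading the effectiveness join fibre-wise as you correctly do) and is therefore computed inside the frame \( \downarrow \beta(b) \) whether or not \( X \) has a bottom element, so the connected-component reduction is superfluous at this stage --- though harmless, and in the spirit of the paper's Section 4, where the bottom element is eliminated in the proof of the underlying theorem rather than in the passage to this corollary.
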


\end{document}